\documentclass[12pt]{article}
\usepackage{amssymb, amsmath, amsthm, amsfonts, tikz, geometry, ytableau, mathrsfs}
\usepackage[indent,margin=1cm]{caption}
\usepackage{fullpage}
\usepackage[colorlinks,linkcolor=blue,anchorcolor=blue,citecolor=blue]{hyperref}

\numberwithin{equation}{section}
\numberwithin{figure}{section}

\hypersetup{colorlinks = true}
\newtheorem{theorem}{Theorem}[section]
\newtheorem{lemma}[theorem]{Lemma}

\newtheorem{proposition}[theorem]{Proposition}
\newtheorem{problem}[theorem]{Problem}
\newtheorem{example}[theorem]{Example}

\newcommand{\Des}{{\mathrm{Des}}}
\newcommand{\des}{{\mathrm{des}}}

\begin{document}
\begin{center}
	{\large \bf Combinatorial relations among restricted and half Eulerian polynomials of types $A$, $B$, and $D$}
\end{center}
\begin{center}
Zhong-Xue Zhang\\[6pt]
\end{center}

\begin{center}
Mathematics Teaching and Research Section, Basic Department\\
Naval University of Engineering\\
Wuhan, Hubei 430030, P. R. China\\[6pt]

Email: {\tt zhzhx@mail.nankai.edu.cn}
\end{center}

\noindent\textbf{Abstract.}
In this paper, we study relations among several types of Eulerian polynomials from a combinatorial viewpoint. We establish an identity between the restricted Eulerian polynomials of types $A$ and $B$. As an application, we present a bijective proof of a new identity involving the Eulerian polynomials of type $A$ and type $B$, solving a recent open problem proposed by Zhang. Additionally, we derive an identity between the half Eulerian polynomials of type $B$ and type $D$. Using this identity, we further obtain another relation about the Eulerian polynomials of type $A$ and type $B$, as well as a recursive formula connecting the restricted Eulerian polynomials of type $D$ and Eulerian polynomials of types $A$ and $B$.  \\[6pt]
\noindent \emph{AMS Mathematics Subject Classification 2020:} 05A05, 05A19

\noindent \emph{Keywords:} Eulerian polynomials, restricted Eulerian polynomials, half Eulerian polynomials, recurrence relations, bijective proof.

\section{Introduction}\label{intro}

Eulerian polynomials are fundamental objects in algebraic combinatorics and geometry, with a wealth of generalizations and variants studied in the literature (see \cite{Brenti94, DPS09, P19, VW13, Zhu20} and references therein).
In this paper, we focus on two such variants: restricted Eulerian polynomials (conditioned on permutations ending with a fixed element) and half Eulerian polynomials (conditioned on permutations ending with a positive or negative element).
While the Eulerian polynomials of types $A$ and $B$ are connected by known identities, the combinatorial relations for restricted Eulerian polynomials and half Eulerian polynomials
have not been systematically established.  We address this gap by deriving explicit combinatorial relations among these polynomials.

Let $\mathfrak{S}_X$ denote the set of permutations of a totally ordered $n$-element set $X$.
For a permutation $\pi=\pi_1\pi_2\cdots\pi_n\in\mathfrak{S}_X$, we define the descent set $\Des(\pi) = \{\, i\in[n-1] \mid \pi_i > \pi_{i+1} \,\}$ and write $\des(\pi) = |\Des(\pi)|$.
Let $[n]=\{1,2,\ldots,n\}$. If $X=[n]$, then we write $\mathfrak{S}_n$ for $\mathfrak{S}_{[n]}$.
The Eulerian polynomial of type $A$ is
\[
A_n(t) = \sum_{\pi\in\mathfrak{S}_n} t^{\des(\pi)}.
\]
Given a totally ordered finite set $Y$, one can define the reduction of $Y$, denoted $\mathrm{red}(Y)$, as the set obtained from $Y$ by replacing its $i$-th smallest entry with $i$. In the same way one can define the reduction of any permutation $\pi\in \mathfrak{S}_Y$, denoted $\mathrm{red}(\pi)$.
Clearly, we have $\Des(\pi)=\Des(\mathrm{red}(\pi))$. It is well-known that $A_n(t)$ is real-rooted, originally established by Frobenius \cite{F1910}. Moreover, $A_{n-1}(t)$ interlaces $A_n(t)$; see \cite{Chow22, LW07, Petersen-book}.
These polynomials arise naturally in geometry: their coefficients form the $h$-vectors of Coxeter complexes of type $A$; see \cite{Petersen-book} for further details.
The geometric viewpoint naturally extends to other finite Coxeter groups. In particular, the Eulerian polynomials of type $B$ correspond to the $h$-vectors of Coxeter complexes of type $B$ \cite{Petersen-book}.

Given a finite set $X$ of positive integers, let $\mathfrak{B}_X$ denote the set of all signed permutations of $X$, and  let $\mathfrak{C}_X$ denote the set of all signed sets of $X$. The hyperoctahedral group $\mathfrak{B}_{[n]}$, denoted $\mathfrak{B}_n$, consists of all signed permutations $\pi=\pi_1\pi_2\cdots\pi_n$ of $[n]$ in window notation.
For the convenience, we sometimes use $\bar{i}$ to represent $-i$.
For $\pi\in \mathfrak{B}_n$, let $\pi_0 := 0$. The type $B$ descent set is $\Des_B(\pi) = \{\, i\in\{0,1,\ldots,n-1\}\mid \pi_i > \pi_{i+1} \,\}$ and we set $\des_B(\pi) = |\Des_B(\pi)|$.
The corresponding Eulerian polynomial of type $B$ is
\[
B_n(t) = \sum_{\pi\in \mathfrak{B}_n} t^{\des_B(\pi)},
\]
which is real-rooted \cite{Brenti94}, and $B_{n-1}(t)$ interlaces $B_n(t)$; see \cite{Chow22, Guo24}.
For a set $Y$ of integers with distinct absolute values, we define the reduction of
$Y$, denoted $\mathrm{red}(Y)$, to be the set obtained from $Y$ by replacing the entry with the $i$-th smallest absolute value with $i$ or $-i$ according to its sign.
In the same way, we can define the reduction of a permutation $\pi\in \mathfrak{S}_Y$ and denote it as $\mathrm{red}(\pi)$.
For example, if $Y=\{\bar{2},4,\bar{5},9\}$ and $\pi=(4,\bar{5},\bar{2},9)$, then $\mathrm{red}(Y)=\{\bar{1},2,\bar{3},4\}$ and $\mathrm{red}(\pi)=(2,\bar{3},\bar{1},4)$. For any permutation $\pi$ of $Y$, let $\Des_B(\pi)=\Des_B(\mathrm{red}(\pi))$ and $\des_B(\pi) = \des_B(\mathrm{red}(\pi))$.
An important identity between the $A_n(t)$ and $B_n(t)$ can be found in \cite{Petersen-book} as follows:
\begin{align}\label{eq-ab-for-red}
2B_n(t^2) = (1+t)^{n+1}A_n(t)+(1-t)^{n+1}A_n(-t).
\end{align}
Santocanale provided a bijective proof for this identity \cite{S23}.

Let $\mathfrak{D}_n \subset \mathfrak{B}_n$ denote the subgroup consisting of signed permutations with an even number of negative entries. For $\pi\in \mathfrak{D}_n$, the type $D$ descent set is
\[
\Des_D(\pi) =\{0 \mid  \pi_1 + \pi_2 < 0\} \cup \{\, i\in\{1,2,\ldots,n-1\} \mid \pi_i > \pi_{i+1} \,\},
\]
with $\des_D(\pi) = |\Des_D(\pi)|$. The Eulerian polynomial of type $D$ is
\[
D_n(t) = \sum_{\pi\in \mathfrak{D}_n} t^{\des_D(\pi)}.
\]
The real-rootedness of $D_n(t)$ was conjectured by Brenti \cite{Brenti94} and first proved by Savage and Visontai \cite{SV15}.
Similarly, from a geometric perspective, the Eulerian polynomials $D_n(t)$ correspond to the $h$-vectors of the Coxeter complexes of type $D$; see \cite{Petersen-book}.
The Eulerian polynomial of type $D$ admits a fundamental decomposition in terms of Eulerian polynomials of type $A$ and type $B$ \cite{Stembridge94}:
\begin{align}\label{rela-ABD}
D_n(t) = B_n(t) - n2^{\,n-1} t A_{n-1}(t).
\end{align}
Santocanale \cite{S23} also gave a bijective proof of this identity.

Recently, Zhang \cite{Zhang25} gave the following new relation between Eulerian polynomials of type $A$ and those of type $B$.
\begin{theorem}[{\cite[Theorem 2.7]{Zhang25}}]\label{thm-Zhang}
	For positive integer $n$, we have
	\begin{align}\label{equ-ABZhang}
		B_n(t)=\sum_{m_1+2m_2+\cdots+nm_n=n}\frac{n!(t+1)^{m_1}}{\prod_{j=1}^nj^{m_j}\cdot m_j!}\cdot \prod_{i=1}^{n-1}\left(\frac{tA_i(t)2^{i+1}}{i!}\right)^{m_{i+1}}.
	\end{align}
\end{theorem}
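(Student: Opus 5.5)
The plan is to read \eqref{equ-ABZhang} as an instance of the exponential formula and then verify it with known exponential generating functions. The coefficient $n!/\prod_j j^{m_j}m_j!$ is the number of permutations of $[n]$ with $m_j$ cycles of length $j$. Hence the right-hand side equals
\[
\sum_{\sigma\in\mathfrak{S}_n}\prod_{\text{cycles } c \text{ of } \sigma} w_{|c|},\qquad w_1=1+t,\quad w_k=\frac{tA_{k-1}(t)2^{k}}{(k-1)!}\ (k\ge 2).
\]
By the exponential formula this gives
\[
\sum_{n\ge0}\Big(\sum_{\sigma\in\mathfrak{S}_n}\prod_{c} w_{|c|}\Big)\frac{x^n}{n!}
=\exp\Big(\sum_{k\ge1}\frac{w_kx^k}{k}\Big)
=\exp\Big((1+t)x+t\sum_{k\ge2}A_{k-1}(t)\frac{(2x)^k}{k!}\Big),
\]
using $w_kx^k/k=tA_{k-1}(t)(2x)^k/k!$ for $k\ge2$. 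So the theorem is equivalent to the identity of formal power series in $x$
\[
\sum_{n\ge0}B_n(t)\frac{x^n}{n!}=\exp\Big((1+t)x+t\sum_{k\ge2}A_{k-1}(t)\frac{(2x)^k}{k!}\Big),
\]
with $B_0=A_0=1$.

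To prove this identity, I will compare logarithmic derivatives in $x$, since both sides equal $1$ at $x=0$. I will use the classical generating functions
\[
\mathcal A(y):=\sum_{n\ge0}A_n(t)\frac{y^n}{n!}=\frac{(1-t)e^{(1-t)y}}{1-te^{(1-t)y}},\qquad
\sum_{n\ge0}B_n(t)\frac{x^n}{n!}=\frac{(1-t)e^{(1-t)x}}{1-te^{2(1-t)x}}.
\]
If one prefers not to quote the type $B$ formula, it can be derived from \eqref{eq-ab-for-red} and the type $A$ generating function.

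Differentiating the exponent on the right gives $(1+t)+2t\sum_{k\ge2}A_{k-1}(t)(2x)^{k-1}/(k-1)!$. This equals $1+t+2t(\mathcal A(2x)-1)=1-t+2t\mathcal A(2x)$. Substituting the closed form for $\mathcal A$ yields
\[
(1-t)\Big(1+\frac{2te^{2(1-t)x}}{1-te^{2(1-t)x}}\Big).
\]
This is exactly $\frac{d}{dx}\log\big[(1-t)e^{(1-t)x}/(1-te^{2(1-t)x})\big]=(1-t)+\frac{2t(1-t)e^{2(1-t)x}}{1-te^{2(1-t)x}}$. Equal logarithmic derivatives together with equal constant terms give the identity. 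Extracting the coefficient of $x^n/n!$ then yields \eqref{equ-ABZhang}.

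The main obstacle is not this computation, which is routine once the generating functions are in hand. It is the bookkeeping that matches the weights to the exponential formula, namely the factor $j^{m_j}$ and the shift $A_i$ versus cycle length $i+1$. I will double-check this by testing $n=1,2$: $B_1=1+t$, and $B_2=(1+t)^2+\tfrac{1}{2}\cdot 4t\cdot 2=1+6t+t^2$, which is correct.

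A more bijective alternative would decompose a signed permutation according to a cycle structure. Fixed points would carry weight $1+t$ (the two signs), and a $k$-cycle would carry $2^k tA_{k-1}(t)/(k-1)!$, interpreted as signs times a permutation of the remaining $k-1$ letters with a forced descent. I expect the generating-function route to be much cleaner, so I would follow it.
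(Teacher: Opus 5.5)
Your proof is correct, but it takes a generating-function route, whereas the paper's proof is bijective. That matters there, since Zhang asked specifically for a bijective proof.

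You read $n!/\prod_j j^{m_j}m_j!$ as the number of permutations of cycle type $\mathbf m$ and apply the exponential formula. This reduces \eqref{equ-ABZhang} to $\sum_{n\ge0}B_n(t)x^n/n!=\exp\bigl((1+t)x+t\sum_{k\ge2}A_{k-1}(t)(2x)^k/k!\bigr)$, which you check by logarithmic differentiation against the classical closed forms. The computation $E'(x)=1-t+2t\mathcal A(2x)$ and the comparison are right. The type $B$ closed form does follow from \eqref{eq-ab-for-red}, as you say.

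The paper proceeds differently:
\begin{enumerate}
\item It combines $j^{m_j}$ with the factors $(j-1)!$ coming from the $i!$ in the denominators. This gives $n!/\prod_j j!^{m_j}m_j!$, the number of set partitions of type $\mathbf m$.
\item It weights a block of size $k$ by the $\des_B$-generating function over all signings of the block, arranged as sequences ending with the entry of smallest absolute value. Lemma~\ref{lem-general-1} gives $1+t$ for $k=1$ and $2^k tA_{k-1}(t)$ for $k\ge2$.
\item It cuts a signed permutation after each right-to-left absolute minimum. $\des_B$ is additive over the pieces, because a descent occurs at a cut exactly when the next piece begins with a negative entry, which is that piece's descent at position $0$.
\end{enumerate}

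Your argument is shorter and essentially recovers the identity as a statement about $\log\sum_n B_n(t)x^n/n!$. However, it leans on the known closed forms and gives no term-by-term explanation, while the paper's argument gives every factor a combinatorial meaning. The two readings of the coefficient are the same structure in disguise: write each cycle so that it ends at its smallest entry. So the ``bijective alternative'' you sketch at the end, once made precise, is essentially the paper's proof.
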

Zhang \cite[Problem 2.8]{Zhang25} asked for a bijective proof of Theorem \ref{thm-Zhang}.
In this paper, we answer Zhang's question by considering the relation of the corresponding restricted Eulerian polynomials.

Let us contintue to recall the definitions of the restricted Eulerian polynomials of types $A$, $B$, and $D$.
Let  $\mathfrak{S}_{n,j}$ be the subset of $\mathfrak{S}_n$ consisting of all elements ending with $j$. 
The restricted Eulerian polynomial of type $A$ is
$$\mathbf{A}_{n,j}(t)=\sum_{\pi\in \mathfrak{S}_{n,j}}\pi^{\des(\pi)}.$$
Dey \cite{D23} proved that $\mathbf{A}_{n,j}(t)$ is real-rooted and that $\mathbf{A}_{n-1,j}(t)$ interlaces $\mathbf{A}_{n,j}(t)$. Conger \cite{C2010} showed that $\mathbf{A}_{n,j}(t)$ matches the restricted Eulerian polynomials studied in \cite{NPT11}, where the latter were defined from a geometric perspective. Moreover, $\mathbf{A}_{n,j}(t)$ also coincides with a refinement of $\mathbf{s}$-Eulerian polynomials $E_{n,n-j}^{(\mathbf{s})}(t)$ for $\mathbf{s}=(1,2,\ldots,n)$ \cite{SV15}.
 For more information about $\mathbf{A}_{n,j}(t)$, see \cite{BW08, ERS98, KN09, WXX10}. 

Similarly, for the hyperoctahedral group $\mathfrak{B}_n$, let $\mathfrak{B}_{n,j} = \{\,\pi_1\cdots\pi_n \in \mathfrak{B}_n \mid \pi_n = j \,\}$, where $j \in \{\pm 1, \pm 2, \ldots, \pm n\}$. The restricted Eulerian polynomial of type $B$ is
\[
\mathbf{B}_{n,j}(t) = \sum_{\pi \in \mathfrak{B}_{n,j}} t^{\des_B(\pi)}.
\]
Dey, Shankar and Sivasubramanian \cite{DSS24} obtained a recurrence for $\mathbf{B}_{n,j}(t)$ and proved its real-rootedness.
As far as we know, no relation between $\mathbf{A}_{n,j}(t)$ and $\mathbf{B}_{n,j}(t)$ has been established.
In this paper, we obtain the following result.
\begin{theorem}\label{thm-AB}
	For a positive integer $n$ and $1\leq i\leq n$, we have
	\begin{equation}\label{equ-AB}
		\mathbf{B}_{n,i}(t)=2^{n-i}\sum_{j=0}^{i-1}\binom{i-1}{j} \mathbf{A}_{n,i-j}(t).
	\end{equation}
\end{theorem}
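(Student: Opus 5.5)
The plan is a bijective proof, organized by the set of negative entries. Fix $\pi\in\mathfrak{B}_{n,i}$ with $1\le i\le n$, so $\pi_n=i>0$. Let $N\subseteq[n]\setminus\{i\}$ be the set of absolute values carrying a minus sign. Split $N=N_{<}\cup N_{>}$, where $N_{<}=N\cap[i-1]$ and $N_{>}=N\cap\{i+1,\ldots,n\}$, and put $j=|N_{<}|$.

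The right-hand side of \eqref{equ-AB} suggests two claims:
\begin{itemize}
\item[(a)] For fixed $N_{<}$, the $2^{n-i}$ choices of $N_{>}$ all give the same $\des_B$-distribution.
\item[(b)] That common distribution, over signed permutations ending in $i$ with a given $N_{<}$ of size $j$, equals $\mathbf{A}_{n,i-j}(t)$.
\end{itemize}
Summing over the $\binom{i-1}{j}$ choices of $N_{<}$ and then over $j$ gives \eqref{equ-AB}.

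\textbf{Step 1: translation to a word statistic.} I write $\des_B(\pi)$ as the number of descents of the word $w=0\,\pi_1\cdots\pi_n$. Once the sign set $N$ is fixed, standardizing $w$ is a bijection onto permutations $\sigma\in\mathfrak{S}_{n+1}$ with prescribed first value $|N|+1$ and prescribed last value $|N|+1+(i-1-j)+1$. So everything reduces to descent polynomials of $\mathfrak{S}_{n+1}$ with fixed first and last letters. In this language, (b) should say that $\mathbf{A}_{n,i-j}$ appears once the letter $0$ is absorbed. Concretely, I would reorder the alphabet as: the $i-1-j$ small positive letters, then $i$, then all remaining letters (negatives and large positives). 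I would then check that deleting the leading $0$ and using this order gives a permutation of $[n]$ ending in the value $i-j$. A descent at position $0$ (i.e.\ $\pi_1<0$) must be traded for a descent elsewhere. I expect to do this with a cyclic-shift/rotation argument in the style of Santocanale's bijection \cite{S23}.

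\textbf{Step 2: proof of (a).} I start with the largest letter $n$, assuming $i<n$. If $n$ sits at position $p<n$ of $\pi$ with neighbours $x=\pi_{p-1}$ (where $x=0$ when $p=1$) and $y=\pi_{p+1}$, then both $x\,n\,y$ and $x\,\bar n\,y$ contribute exactly one descent among the two adjacent pairs. Hence flipping the sign of $n$ preserves $\des_B$. The leading $0$ guarantees that $x$ always exists, and $y$ exists because $\pi_n=i\neq \pm n$. For a general letter $m>i$, a direct flip fails when $m$ is adjacent to a letter of larger absolute value. Instead, I would delete $n$ and record the insertion gap, noting that the descent increment $[x<y]$ is the same for both signs. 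Inducting downward on absolute values, this makes $m$ extreme among the surviving letters, flips its sign, and reinserts the deleted letters into the recorded gaps. The point to check is that the recorded increments $[x<y]$ do not change, since the flip only alters comparisons involving $\pm m$. This yields a $\des_B$-preserving action of $(\mathbb{Z}/2)^{n-i}$ on $\mathfrak{B}_{n,i}$ that toggles $N_{>}$ while fixing $N_{<}$.

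\textbf{Main obstacle and fallback.} By (a) we may take all letters greater than $i$ positive. Then the entries are the letters $1,\ldots,n$ with exactly $j$ of the letters $1,\ldots,i-1$ negated, preceded by $0$, and ending at $i$. The task in (b) is to match this with $\mathfrak{S}_{n,i-j}$. I expect this to be the main obstacle, because the type $B$ descent at position $0$ has no direct type $A$ counterpart. My first attempt is the following map. Move the negated small letters, which lie below $0$, to the top of the order, just above $n$. Check by the same insertion/removal bookkeeping that the number of descents of $0\,\pi$ equals $\des$ of the resulting permutation of $[n]$. Each negative letter $\bar a$ with $a<i$ behaves like a new maximum relative to the positives exactly when the $0$ prefix is discarded. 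The final letter $i$ then has exactly $i-1-j$ letters below it, so it becomes $i-j$ after reduction.

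If this direct map fails, the fallback is an induction on $n$. I would show that both sides of \eqref{equ-AB} satisfy the same insertion recurrence obtained by removing the letter $n$, using the recurrences of Dey \cite{D23} for $\mathbf{A}_{n,j}(t)$ and of Dey, Shankar and Sivasubramanian \cite{DSS24} for $\mathbf{B}_{n,j}(t)$. The base case is $i=n$, where the identity reads $\mathbf{B}_{n,n}=\sum_j\binom{n-1}{j}\mathbf{A}_{n,n-j}$.
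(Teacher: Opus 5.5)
Your Step 2 does not prove (a). Suppose you delete the letters of absolute value larger than $m$, flip $m$, and put the deleted letters back into the same gaps. The result is just $\pi$ with the sign of $m$ changed in place, which you already observed can change $\des_B$. Your claim that the increments $[x<y]$ are unchanged is false whenever $\pm m$ is one of the neighbours $x,y$ of a later insertion. Take $n=3$, $i=1$, $m=2$ and $\pi=321$, so $\des_B(\pi)=2$. Deleting $3$, flipping $2$ and reinserting $3$ between $0$ and $\bar 2$ gives $3\bar 21$, with $\des_B=1$: the increment for $3$ drops from $[0<2]=1$ to $[0<\bar 2]=0$. Recording gaps by their rank among ascent gaps and among descent gaps, rather than by position, would repair this, but you do not need (a) at all. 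Part (b) is also not established. The map you list as a ``first attempt'' is the right one, but you defer its verification to unspecified bookkeeping or a rotation argument, with an unworked induction as fallback.

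What is missing is the reason the relabelling works, and the fact that it works for every sign pattern at once. This is the paper's argument (Lemmas~\ref{lem-1AB} and~\ref{lem-2AB}).

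Fix the signed set of entries. Relabel the word $0\,\pi$ by the order that lists $0$ and the positive letters increasingly, and then all negative letters increasingly, above every positive letter. Then:
\begin{itemize}
\item Adjacent letters in the same sign class keep their comparison.
\item Each step from the positive class to a negative letter turns a descent into an ascent. This includes $0\to\pi_1$ when $\pi_1<0$, which is the type $B$ descent at position $0$.
\item Each step from a negative letter back to the positive class turns an ascent into a descent.
\end{itemize}
Since $0\,\pi$ begins with $0$ and ends with $i>0$, the two kinds of steps occur equally often. Hence $\des_B(\pi)$ equals the number of descents of the relabelled word. Its first letter is the minimum, so it can be dropped without changing the descent count.

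The letters below $i$ in the new order are $0$ and exactly the $i-1-j$ unnegated letters of $[i-1]$. Positive letters of absolute value greater than $i$ stay above $i$, and negative letters move to the top. So after dropping $0$, the image ends in $i-j$ and lies in $\mathfrak{S}_{n,i-j}$, whatever $N_{>}$ is. This gives a $\des$-preserving bijection onto $\mathfrak{S}_{n,i-j}$ for each of the $2^{n-i}\binom{i-1}{j}$ signed sets, which is the theorem, and (a) comes for free.

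On your standardized word from Step 1, this relabelling is exactly the cyclic value shift subtracting $|N|$ modulo $n+1$. The rotation you anticipated is therefore this argument, but it has to be stated and checked.
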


For $j \in \{\pm 1, \pm 2, \ldots, \pm n\}$, let $\mathfrak{D}_{n,j} = \{\, \pi_1\cdots\pi_n \in \mathfrak{D}_n \mid \pi_n = j \,\}$. The restricted Eulerian polynomial of type $D$ is defined by
\[
\mathbf{D}_{n,j}(t) = \sum_{\pi \in \mathfrak{D}_{n,j}} t^{\des_D(\pi)}.
\]

We also consider half Eulerian polynomials of type $B$ and type $D$. 
Let $\mathfrak{B}_n^+=\{\pi_1\cdots\pi_n\in\mathfrak{B}_n\mid \pi_n>0\}$ and $\mathfrak{B}_n^-=\{\pi_1\cdots\pi_n\in \mathfrak{B}_n\mid \pi_n<0\}$. The
half Eulerian polynomials of type $B$ are
$$B_n^+(t)=\sum_{\pi\in\mathfrak{B}_n^+}\pi^{\des_B(\pi)},\,\,\,\,\,\,B_n^-(t)=\sum_{\pi\in\mathfrak{B}_n^-}\pi^{\des_B(\pi)}.$$
Analogously, letting $\mathfrak{D}_n^+=\{\pi_1\cdots\pi_n\in\mathfrak{D}_n\mid \pi_n>0\}$ and $\mathfrak{D}_n^-=\{\pi_1\cdots\pi_n\in \mathfrak{D}_n\mid \pi_n<0\}$, the half Eulerian polynomials of type $D$ are
$$D_n^+(t)=\sum_{\pi\in\mathfrak{D}_n^+}\pi^{\des_D(\pi)},\,\,\,\,\,\,D_n^-(t)=\sum_{\pi\in\mathfrak{D}_n^-}\pi^{\des_D(\pi)}.$$
In \cite{H16}, Hyatt established recurrences for the half Eulerian polynomials of type $B$ and type $D$, and proved their real-rootedness. An alternative proof of this property was later given in \cite{YZ}. Abram and Bastidas \cite{AB25} showed that $B_n^+(t)$ coincide with the Ehrhart $h^*$-polynomials of certain fundamental parallelepipeds. Based on this correspondence, Abram and Bastidas provided another proof of the real-rootedness of $B_n^+(t)$. In \cite{MMYY22}, Ma, Ma, Yeh and Yeh investigated the connection between half Eulerian polynomials of type $B$ and half alternating run polynomials. In the present work, we prove the following theorem.
\begin{theorem}\label{thm-BDhalf}
	For any positive integer $n$, we have
	\begin{align}\label{eq-BDhalf}
		B_n^+(t)=D_n^+(t)+nB_{n-1}^-(t),\qquad B_n^-(t)=D_n^-(t)+ntB_{n-1}^+(t).
	\end{align}
\end{theorem}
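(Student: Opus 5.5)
The plan is to prove both identities in \eqref{eq-BDhalf} by one sign-reversing construction. I split $\mathfrak{B}_n^{\pm}$ into the elements with an even number of negative entries, which form $\mathfrak{D}_n^{\pm}$, and those with an odd number. The even part of $B_n^{\pm}(t)$ differs from $D_n^{\pm}(t)$ only through the statistic at position $0$. On $\mathfrak{D}_n$, $\des_B$ and $\des_D$ agree at every position $i\ge 1$ and may disagree only at position $0$, where $\des_B$ tests $\pi_1<0$ while $\des_D$ tests $\pi_1+\pi_2<0$. The two tests agree unless $|\pi_1|<|\pi_2|$ and exactly one of $\pi_1,\pi_2$ is negative.

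The device I would use is the involution $\iota$ on $\mathfrak{B}_n$ that flips the sign of the entry of smallest absolute value, namely $\pm1$. It toggles the parity of the number of negative entries. Since $1$ is smaller in absolute value than every other entry, $\iota$ preserves every comparison $\pi_i>\pi_{i+1}$ with $i\ge1$ except when both $\pi_i,\pi_{i+1}$ are relevant. More precisely, the comparison between $\pm1$ and a neighbour $\pm k$ with $k\ge 2$ is decided by the sign of $\pm k$ alone, so it does not change. Away from the last position, $\iota$ also preserves the sign of $\pi_n$, so it respects the split into $+$ and $-$ halves.

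The key steps are as follows. \emph{Step 1:} on $\mathfrak{B}_n$, write $\des_B=\des_D+\varepsilon$, where $\varepsilon\in\{-1,0,1\}$ records the disagreement at position $0$ described above. \emph{Step 2:} pair each $\sigma\in\mathfrak{B}_n^{\pm}\setminus\mathfrak{D}_n^{\pm}$ with $\iota(\sigma)\in\mathfrak{D}_n^{\pm}$. Then check that $t^{\des_B(\sigma)}+t^{\des_B(\iota\sigma)}-t^{\des_D(\iota\sigma)}$ reduces, after summing, to the contributions of an exceptional set $E^{\pm}$. I expect $E^{\pm}$ to be the signed permutations in which $\pm1$ occupies the last position, or in which the pairing fails to preserve the half; this is where the sign of $\pi_n$ is forced to change. \emph{Step 3:} identify $E^+$ with $[n]\times\mathfrak{B}_{n-1}^-$ and $E^-$ with $[n]\times\mathfrak{B}_{n-1}^+$. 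The factor $n$ should come from choosing which absolute value is removed (equivalently, the value of the last entry), followed by reduction $\mathrm{red}$ of the remaining word. The extra factor $t$ in the second identity should come from the forced descent created when a negative last entry is reattached after an entry that ends in a positive reduced permutation.

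The main obstacle is Step 2. I need to check carefully that the position-$0$ discrepancy between $\des_B$ and $\des_D$ is exactly compensated by $\iota$ whenever $\pm1$ lies in position $1$ or $2$; these are the only cases where flipping $\pm1$ can affect the test $\pi_1+\pi_2<0$ or $\pi_1<0$.

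If the plain involution $\iota$ does not produce a clean cancellation, I would first try a modified map. This map flips the sign of $\pi_1$ instead of that of $\pm1$ when $|\pi_1|>|\pi_2|$, in the spirit of Santocanale's bijective proof of \eqref{rela-ABD}. As a consistency check, adding the two identities must recover $B_n=D_n+n(B_{n-1}^-+tB_{n-1}^+)$. Comparing this with \eqref{rela-ABD} forces $B_{n-1}^-(t)+tB_{n-1}^+(t)=2^{n-1}tA_{n-1}(t)$, and I would verify this independently for small $n$ to calibrate the exceptional sets before writing the bijection in full.
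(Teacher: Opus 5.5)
Your Step 2 fails, and not where you expect. The cases you flagged ($\pm1$ in position $1$ or $2$) are actually harmless: each such pair does leave a single monomial. The real problem is elsewhere. The involution $\iota$ never changes the sign of $\pi_1+\pi_2$, and it changes $\des_B$ only when $\pm1$ is in position $1$. So whenever $\pm1$ sits in a position $p$ with $3\le p\le n-1$, $\sigma$ and $\iota(\sigma)$ share both $b=\des_B$ and $d=\des_D$. The pair then contributes $2t^{b}-t^{d}$ to $B_n^+(t)-D_n^+(t)$. This is a monomial only if $b=d$, which fails exactly when $\pi_1,\pi_2$ have opposite signs and $|\pi_1|<|\pi_2|$, a condition $\iota$ does not touch. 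For example, with $n=4$, $\sigma=\bar{2}314$ and $\iota(\sigma)=\bar{2}3\bar{1}4$ both have $\des_B=2$ while $\des_D(\iota(\sigma))=1$, so the pair contributes $2t^2-t$.

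Moreover, even where pairs behave, each pair leaves one monomial rather than cancelling, so the remainder is not concentrated on your exceptional set. A count confirms this. The elements of $\mathfrak{B}_n^+$ with $\pi_n=1$ contribute total coefficient sum $(n-1)!\,2^{n-2}$ to $B_n^+(t)-D_n^+(t)$, whereas $nB_{n-1}^-(1)=n!\,2^{n-2}$. Since their last entry is forced to be $1$, they also cannot supply the factor $n$. So Step 3 cannot be carried out as planned.

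The missing idea is to split $\mathfrak{B}_n^{\pm}$ by smoothness, i.e.\ by the sign of $\pi_1\pi_2$, rather than by parity.

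\emph{Smooth case ($\pi_1\pi_2>0$).} Here $\pi_1<0$ if and only if $\pi_1+\pi_2<0$. Flipping the sign of $\pi_1$ in the odd smooth elements gives a descent-preserving bijection onto $\mathfrak{D}_n^{\pm}$; this is a four-case check on the relative order of $\pi_1,\pi_2$.

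\emph{Non-smooth case ($\pi_1\pi_2<0$).} Positions $0$ and $1$ together contribute exactly one descent, so $\des_B(\pi)=1+\des(\pi_2\cdots\pi_n)$. Send $\pi$ to $(|\pi_1|,\sigma)$, where $\sigma$ is obtained from $\mathrm{red}(\pi_2\cdots\pi_n)$ by replacing each entry $x$ with the entry of absolute value $n-|x|$ and opposite sign. This replacement preserves comparisons between entries of equal sign and reverses those between entries of opposite sign. Counting sign changes along the word, together with the sign of the last entry, gives:
\begin{itemize}
\item if $\pi_n>0$, then $\sigma\in\mathfrak{B}_{n-1}^-$ and $\des_B(\sigma)=\des_B(\pi)$;
\item if $\pi_n<0$, then $\des_B(\sigma)=\des_B(\pi)-1$, which is where the factor $t$ in the second identity comes from.
\end{itemize}
The factor $n$ is $|\pi_1|$, the first entry, not the last.
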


The paper is organized as follows.
In Section~\ref{AB}, we present our bijective proof of Theorem \ref{thm-Zhang}, as well as that of Theorem \ref{thm-AB}. Section~\ref{halfBD} includes a proof of  Theorem \ref{thm-BDhalf}, together with a new relation between Eulerian polynomials of type~$A$ and type~$B$.

\section{Bijective proofs of Theorem \ref{thm-Zhang} and Theorem \ref{thm-AB}}\label{AB}

In this section we aim to give a bijective proof of Theorem \ref{thm-Zhang}, and hence to answer Zhang's question \cite[Problem 2.8]{Zhang25}.
To this end, we first prove Theorem \ref{thm-AB} by using the bijective principal.

Let us begin with a combinatorial proof of the following result.

\begin{lemma}\label{lem-1AB}
	Fixing $1\leq i\leq n$, let $X_i$ be an $n$-element set satisfying:
    \begin{itemize}
		\item (i) $\{1,2,\ldots,i\}\subseteq X_i \subseteq \{1,2,\ldots,i,\pm (i+1),\ldots,\pm n\}$;
		\item (ii) for each $r\in\{i+1,i+2,\ldots,n\}$, exactly one of $r$ and $-r$ belongs to $X_i$.
	\end{itemize}
If we let
\begin{align}\label{eq-rni}
\mathfrak{R}_{n,X_i}=\{\pi_1\cdots\pi_n\in \mathfrak{S}_{X_i}\mid  \pi_n=i\},
\end{align}
then
\begin{align*}
		\sum_{\pi\in\mathfrak{R}_{n,X_i}}t^{\des_B(\pi)}=\mathbf{A}_{n,i}(t).
	\end{align*}
\end{lemma}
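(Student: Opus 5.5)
The idea is to view $\des_B(\pi)$ as an ordinary descent count of a word over a totally ordered alphabet, and then transform the alphabet step by step until it becomes $[n]$, keeping the descent count unchanged at each step.

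\textbf{Step 1 (reformulation).} The absolute values of the entries of $X_i$ are exactly $1,\dots,n$, so $\mathrm{red}(\pi)=\pi$ for every $\pi\in\mathfrak{S}_{X_i}$. Hence $\des_B(\pi)$ is the number of descents of the word $w(\pi)=0\,\pi_1\pi_2\cdots\pi_n$, read in the natural order of the integers. This word is a permutation of the alphabet $X_i\cup\{0\}$ that starts with $0$ and ends with $i$. Since descents of a word depend only on the relative order of its letters, it suffices to track the total order on the alphabet.

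\textbf{Step 2 (extreme-letter swap).} Let $w=0\,w_1\cdots w_{n-1}\,i$ be a word with distinct letters, and let $x$ be a letter other than $0$ and $i$ that is the global minimum of the alphabet. Declare $x$ instead to be larger than every other letter. I claim the number of descents does not change. Since $x\neq 0,i$, it occupies an interior position with neighbours $a$ (possibly $a=0$) and $b$.
\begin{itemize}
\item When $x$ is the minimum, the pattern $a>x<b$ contributes exactly one descent, at the pair $(a,x)$.
\item When $x$ is the maximum, the pattern $a<x>b$ contributes exactly one descent, at the pair $(x,b)$.
\end{itemize}
All other adjacent pairs keep their relative order. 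So the descent count is preserved, and the operation is clearly invertible: declare the maximum to be the minimum again.

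\textbf{Step 3 (iteration).} Let $-m_1<-m_2<\cdots<-m_k$ be the negative elements of $X_i$; all of them satisfy $m_j>i$. At each stage the current global minimum of the alphabet is the smallest unprocessed negative letter, which is never $0$ or $i$. So I apply Step 2 successively to $-m_1,-m_2,\dots,-m_k$, each time making the letter a new global maximum. After all $k$ steps, $0$ is the minimum of the alphabet. The letters below $i$ are then exactly $0,1,\dots,i-1$, because every moved letter now lies above all original positive letters. Therefore $i$ has rank $i$ among the letters $\pi_1,\dots,\pi_n$.

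Reducing $\pi_1\cdots\pi_n$ with respect to this final order gives a permutation $\sigma\in\mathfrak{S}_{n}$ ending in $i$. Because $0$ is the minimum, the initial pair $(0,\sigma_1)$ is never a descent, so $\des(\sigma)=\des(w(\pi))=\des_B(\pi)$. The final order depends only on $X_i$, not on $\pi$, so $\pi\mapsto\sigma$ is just a relabeling of letters by a fixed order isomorphism $X_i\to[n]$ that sends $i\mapsto i$. It is therefore a bijection $\mathfrak{R}_{n,X_i}\to\mathfrak{S}_{n,i}$ that preserves descents, and summing $t^{\des}$ over it yields $\sum_{\pi\in\mathfrak{R}_{n,X_i}}t^{\des_B(\pi)}=\mathbf{A}_{n,i}(t)$.

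The main point to check carefully is Step 2: the swapped letter must never be the first letter $0$ or the last letter $i$. This is guaranteed because the swapped letters are exactly the negative elements, and these are distinct from $0$ and from $i\geq 1$. The boundary case where $x=\pi_1$ sits next to $0$ is covered by the same local count.
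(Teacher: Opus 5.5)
Your proof is correct, and the bijection it produces is exactly the paper's $\Phi_{X_i}$. After your $k$ rotations the alphabet is ordered as $0$, then the positive elements of $X_i$ in natural order, then the negative elements in natural order. That is precisely the order encoded by the paper's labelling $\phi(a_r)=r$.

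The difference lies in how $\des(\Phi_{X_i}(\pi))=\des_B(\pi)$ is verified. The paper cuts $\pi$ into maximal same-sign runs $\mathbf{W}_j$, $\mathbf{W}'_j$. It then splits into two cases according to the sign of $\pi_1$. In each case it checks that the descents lost (at position $0$ and at positive-to-negative transitions) are exactly made up by the negative-to-positive transitions, which become descents in the image.

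You instead change the order one letter at a time and need only one local fact. When the moved letter is interior, it is a valley under the old order and a peak under the new one, and either configuration contributes exactly one descent. This avoids the case split and shows exactly which hypotheses are used: the moved letters must be interior, which $\pi_0=0$ and $\pi_n=i>0$ guarantee.

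Your argument also gives Lemma~\ref{lem-2AB} directly. Running the same rotations on $X_i^j$ leaves $i$ with rank $i-j$ among $\pi_1,\ldots,\pi_n$, so the paper's intermediate map onto $X_{i-j}$ is not needed.

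What the paper's run decomposition buys in return is an explicit expression for $\des_B$ in terms of the sign pattern of $\pi$. The paper invokes this same-sign-block bookkeeping again for non-smooth permutations in Lemma~\ref{lem-nonsmooth}.
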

\begin{proof}
	To complete the proof, it suffices to construct a descent preserving bijection from $\mathfrak{R}_{n,X_i}$ to $\mathfrak{S}_{n,i}$.
	Let $X_i=\{a_1,a_2,\ldots, a_i, \ldots,a_n\}$,
	where $a_1=1,a_2=2,\ldots,a_{i}=i<a_{i+1}< a_{i+2}<\cdots <a_{i+l}$
	and ${a}_{i+l+1}< {a}_{i+l+2}<\cdots <{a}_{n}<0$.
	Define a bijection $\phi:\, X_i\to [n]$ by setting $\phi(a_r)=	r$ for any $1\leq r\leq n$.
	We then define $\Phi_{X_i} : \mathfrak{R}_{n,X_i}\to \mathfrak{S}_{n,i}$ by
	\begin{align*}
		\Phi_{X_i}(\pi_1\pi_2\cdots \pi_{n-1}\pi_n)=\phi(\pi_1)\phi(\pi_2)\cdots\phi(\pi_n).
	\end{align*}
	From this construction, it is straightforward to verify that $\Phi_{X_i}(\pi) \in \mathfrak{S}_{n,i}$ for any $\pi\in \mathfrak{R}_{n,X_i}$.
    It is also clear that $\Phi_{X_i}$ is bijective.
	It remains to prove that $\des(\Phi_{X_i}(\pi))=\des_B(\pi)$.

	Let $W=\{a_1,a_2,\ldots,a_{i+l}\}$ and $W'=\{{a}_{i+l+1},{a}_{i+l+2},\ldots,{a}_n\}$. Note that for any $a,b\in W$ we have $a>b$ if and only if $\phi(a)>\phi(b)$. The same property holds for elements of $W'$. For any $\pi\in \mathfrak{S}_{X_i}$, we use $\mathbf{W}_j(\pi)$ (abbreviated as $\mathbf{W}_j$ if the underlying permutation is clear) to denote some maximal subsequence of consecutive elements contained in $W$. Similarly, we use $\mathbf{W}'_j(\pi)$ (abbreviated as $\mathbf{W}'_j$) to denote some maximal subsequence of consecutive elements contained in $W'$. In this way $\pi$
    can be expressed as a word in the alphabet $\{\mathbf{W}_1,\mathbf{W}_2,\ldots,\mathbf{W}'_1,\mathbf{W}'_2,\ldots\}$.
    Write $\Phi(\mathbf{W}_j)$ (respectively, $\Phi(\mathbf{W}'_j)$) for the image of $\Phi_{X_i}(\pi)$ corresponding to the subsequence $\mathbf{W}_j$ (respectively, $\mathbf{W}'_j)$).
    To compare $\des(\Phi_{X_i}(\pi))$ and $\des_B(\pi)$, we are forced to consider the first position of $\pi$ and those positions where some element of $W$ immediately follows some element of $W'$ or vice versa. Since every element $\pi$ of $\mathfrak{R}_{n,X_i}$ ends with $i\in W$, there are two cases to consider:
	\begin{itemize}
		\item Case 1. $\pi$ starts with an element in $W$ and looks like
		$$\mathbf{W}_1\mathbf{W}'_1\cdots \mathbf{W}_{r-1} \mathbf{W}'_{r-1}\mathbf{W}_{r}$$
        for some $r\geq 1$.
        Recall that for any $w \in W$, $w' \in W'$, we have $w >0> w'$ and $0<\phi(w) < \phi(w')$. Consequently,
        \begin{align*}
            \des_B(\pi)=\sum_{i=1}^{r}{\des}(\mathbf{W}_i)+\sum_{i=1}^{r-1}{\des}(\mathbf{W}'_i)+(r-1),
        \end{align*}
        where the last summand $r-1$ is contributed by those $r-1$ pairs $(W_i,W_i')$ ($1\leq i\leq r-1$) since the last element of $W_i$ is always greater than the first element of $W_i'$. It is also clear that
        \begin{align*}
             \des(\Phi_{X_i}(\pi))=\sum_{i=1}^{r}{\des}(\Phi(\mathbf{W}_i))+\sum_{i=1}^{r-1}{\des}(\Phi(\mathbf{W}'_i))+(r-1),
        \end{align*}
        where  the last summand $r-1$ is contributed by those $r-1$ pairs $(\Phi(W_i'),\Phi(W_{i+1}))$ ($1\leq i\leq r-1$) since the last element of $\Phi(W_i')$ is always greater than the first element of $\Phi(W_{i+1})$.

		\item Case 2. $\pi$ starts with an element in $W'$ and looks like
		$$\mathbf{W}'_1\mathbf{W}_1\mathbf{W}'_2\mathbf{W}_2\cdots \mathbf{W}'_r\mathbf{W}_{r}$$
        for some $r\geq 1$.
        Since $\pi_1$ is negative, we have
        \begin{align*}
            \des_B(\pi)=1+\sum_{i=1}^{r}{\des}(\mathbf{W}_i)+\sum_{i=1}^{r}{\des}(\mathbf{W}'_i)+r-1.
        \end{align*}
        Similar to Case 1, one can see that
        \begin{align*}
             \des(\Phi_{X_i}(\pi))=\sum_{i=1}^{r}{\des}(\Phi(\mathbf{W}_i))+\sum_{i=1}^{r}{\des}(\Phi(\mathbf{W}'_i))+r.
        \end{align*}
	\end{itemize}

       Note that ${\des}(\mathbf{W}_i)={\des}(\Phi(\mathbf{W}_i))$ and ${\des}(\mathbf{W}'_i)={\des}(\Phi(\mathbf{W}'_i))$ for each $i\geq 1$. This implies that  $\des(\Phi_{X_i}(\pi)) = \des_B(\pi)$ holds for both cases. Since $\Phi_{X_i}$ is bijective, we get
\begin{align*}
		\sum_{\pi\in\mathfrak{R}_{n,X_i}}t^{\des_B(\pi)}
		=	\sum_{\sigma\in\mathfrak{S}_{n,i}}t^{\des(\sigma)}=\mathbf{A}_{n,i}(t).
	\end{align*}
This completes the proof.
\end{proof}

We would like to point out that $X_i$ in the above lemma is not unique though $i$ is fixed. In fact, the second condition provides some flexibility to choose the set  $X_i$.
We provide the following example to illustrate the proof.

\begin{example}
	Taking $n=8$ and $i=3$, consider $X_3=\{1,2,3,5,8,\bar{7},\bar{6},\bar{4}\}$. The map $\phi$ from Lemma \ref{lem-1AB} is
	\begin{align*}
		\phi(1)=1,\,\phi(2)=2,\,\phi(3)=3,\,\phi(5)=4,\,\phi(8)=5,\,\phi(\bar{7})=6,\,\phi(\bar{6})=7,\,\phi(\bar{4})=8.
	\end{align*}
	For $\pi=\bar{4}21\bar{7}\bar{6}583\in Y$, we have $\Phi_{X_3}(\pi)=82167453$ and $\des_B(\pi)=\des(\Phi_{X_3}(\pi))= 4$.
\end{example}

Based on Lemma \ref{lem-1AB}, we are able to give a more general result. 
The proof of this generalization is also bijective.

\begin{lemma}\label{lem-2AB}
	Fixing two integers $i$ and $j$ with $1\leq i\leq n$ any $0\leq j<i$, let $X_i^j$ be a subset of $\{\pm 1,\pm 2,\ldots,\pm (i-1),i,\pm (i+1),\ldots,\pm n\}$ satisfying the following two conditions:
	\begin{itemize}
		\item (i) for any $1\leq r\leq n$, exactly one of $r$ and $-r$ belongs to $X_i^j$;
		\item (ii) exactly $j$ negative numbers from the set $\{\pm 1,\pm 2,\ldots,\pm (i-1)\}$ belong to $X_i^j$.
	\end{itemize}
If we let
\begin{align}\label{eq-tnij}
\mathfrak{T}_{n,X_i^j}=\{\pi_1\cdots\pi_n\in \mathfrak{S}_{X_{i}^j}\mid  \pi_n=i\},
\end{align}
then
	\begin{align*}
		\sum_{\pi\in\mathfrak{T}_{n,X_i^j}}t^{\des_B(\pi)}=\mathbf{A}_{n,i-j}(t).
	\end{align*}
\end{lemma}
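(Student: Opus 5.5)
The plan is to reduce Lemma~\ref{lem-2AB} to Lemma~\ref{lem-1AB} applied with $i-j$ in place of $i$. I will build an order-preserving relabeling that carries $X_i^j$ onto an admissible set $X_{i-j}$ and sends $i$ to $i-j$.

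The key observation is that $\des_B(\pi)$ depends only on the relative order of the entries of the word $0\,\pi_1\pi_2\cdots\pi_n$. By definition, $\des_B(\pi)=\des_B(\mathrm{red}(\pi))$. The reduction preserves signs and preserves the order among positive entries and among negative entries, so it preserves the relative order of $\{0\}\cup X$. Hence if $\psi:\{0\}\cup X\to\{0\}\cup X'$ is an order-preserving bijection with $\psi(0)=0$, then the letterwise map $\pi\mapsto\psi(\pi_1)\cdots\psi(\pi_n)$ preserves $\Des_B$.

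Now I construct $\psi$ on $X=X_i^j$. Let $P_{\le}$ be the positive elements of $X_i^j$ that are at most $i$. These are the $i-1-j$ positive elements of absolute value less than $i$, together with $i$ itself, so $|P_{\le}|=i-j$, and $i$ is the largest element of $P_{\le}$.
\begin{itemize}
\item Map $P_{\le}$ increasingly onto $1,2,\ldots,i-j$, so that $\psi(i)=i-j$.
\item Let $P_>$ be the positive elements greater than $i$, with $|P_>|=p$, and let $N$ be all negative elements of $X_i^j$. Then $p+|N|=n-(i-j)$.
\item Choose any $p$ values from $\{i-j+1,\ldots,n\}$, for instance the largest $p$ of them. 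Map $P_>$ increasingly onto these values.
\item Map $N$ increasingly onto the negatives of the remaining absolute values in $\{i-j+1,\ldots,n\}$.
\end{itemize}
Since $N<0<P_{\le}<P_>$ and the images satisfy the same inequalities, $\psi$ is order preserving on $\{0\}\cup X_i^j$.

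The image $X'=\psi(X_i^j)$ contains $1,\ldots,i-j$ and exactly one of $\pm r$ for each $r\in\{i-j+1,\ldots,n\}$. So $X'$ is an admissible set $X_{i-j}$ in the sense of Lemma~\ref{lem-1AB}.

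The induced map is therefore a $\des_B$-preserving bijection from $\mathfrak{T}_{n,X_i^j}$ onto $\mathfrak{R}_{n,X'}$, since permutations ending in $i$ go to permutations ending in $i-j$. Lemma~\ref{lem-1AB} then gives
\[
\sum_{\pi\in\mathfrak{T}_{n,X_i^j}}t^{\des_B(\pi)}=\mathbf{A}_{n,i-j}(t).
\]
The only delicate point is justifying that $\Des_B$ is invariant under such relabelings. This must include position $0$: the comparison $\pi_1<0$ is preserved because $\psi$ fixes the sign. I expect this to follow in a line from the definition of $\mathrm{red}$. The rest is counting, to confirm that $|P_{\le}|=i-j$.
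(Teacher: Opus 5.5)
Your proposal is correct and follows essentially the paper's argument: you apply an order- and sign-preserving relabeling of $X_i^j$ onto an admissible set $X_{i-j}$ with $i\mapsto i-j$, and then invoke Lemma~\ref{lem-1AB}. The only difference is cosmetic: the paper keeps the entries $a_{i+1},\ldots,a_n$ fixed and sends the $j$ small negatives to $-i,\ldots,-(i-j+1)$, whereas you relabel everything; your explicit check that such relabelings preserve $\Des_B$, including at position $0$, supplies what the paper leaves to ``a little thought''.
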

\begin{proof}
	Let $$X_i^j=\{\overline{a}_1,\ldots,\overline{a}_j,a_{j+1},\ldots,a_{i-1},a_i,\ldots,a_{n}\},$$
	where $-(i-1)\leq\overline{a}_1<\cdots<\overline{a}_j<0<a_{j+1}<\cdots<a_{i-1}<a_i=i$ and $a_{i+1},\ldots,a_{n}\in \{\pm (i+1),\ldots,\pm n\}$.
	Take $X_{i-j}=\{1,\ldots, i-j, -i,-i+1,\ldots,-i+j-1, a_{i+1},\ldots, a_{n}\}$.
	We first define a map $\phi: X_i^j \to X_{i-j}$ by
	\begin{align*}
		\phi(x)=\begin{cases}
			r-i-1, & x = \overline{a}_r, \ 1 \le r \le j,\\
			r-j, & x = a_r, \ j+1 \le r \le i,\\
			a_r, & x = a_r, \ i+1 \le r \le n.
			\end{cases}			
	\end{align*}
	From this construction, we see that
    $$-i=\phi(\overline{a}_1)<\cdots<\phi(\overline{a}_j)<0<\phi(a_{j+1})<\cdots<\phi(a_{i-1})<\phi(a_i)=i-j.$$
	
	To finish the proof, we next construct a bijection $\Phi$ from $\mathfrak{T}_{n,X_i^j}$ to $\mathfrak{R}_{n,X_{i-j}}$ by setting $\Phi(\pi_1\cdots\pi_n)=\phi(\pi_1)\cdots \phi(\pi_n)$, where $\mathfrak{R}_{n,X_{i-j}}$ is defined by \eqref{eq-rni}.
	For example, if we take $\pi=\overline{a}_1\cdots\overline{a}_ja_{j+1}\cdots a_{i-1}a_{i+1}\cdots a_{n} i$, then $\Phi(\pi)=(-i)\cdots  -(i-j+1) 1\cdots (i-j-1) a_{i+1}\cdots a_{n} (i-j)$. Note that $\des_B(\pi)=\des_B(\Phi(\pi))$.
	A little thought shows that $\Phi$ is a descent preserving bijection. Note that $X_{i-j}$ satisfies the conditions of
    Lemma \ref{lem-1AB}. Thus, we have
	\begin{align}\label{eq-BB}
		\sum_{\pi\in\mathfrak{T}_{n,X_i^j}}t^{\des_B(\pi)}=\sum_{\Phi(\pi)\in\mathfrak{R}_{n,X_{i-j}}}t^{\des_B(\Phi(\pi))}=\sum_{\sigma\in\mathfrak{R}_{n,X_{i-j}}}t^{\des_B(\sigma)}=\mathbf{A}_{n,i-j}(t),
	\end{align}
    as desired.
\end{proof}

We now proceed to prove Theorem~\ref{thm-AB}.

\begin{proof}[Proof of Theorem \ref{thm-AB}]
Let $\mathfrak{O}_{n,i,j}$ denote the set of all possible $n$-element $X_{i}^j$ satisfying the conditions of Lemma~\ref{lem-2AB}.
By definition we find that
	\begin{align}
		\mathbf{B}_{n,i}(t)=\sum_{j=0}^{i-1} \sum_{X_{i}^j\in \mathfrak{O}_{n,i,j}} \sum_{\pi\in\mathfrak{T}_{n,X_{i}^j}}t^{\des_B(w)}.
	\end{align}
Note that for fixed $i$ and $j$ there are $2^{n-i} \binom{i-1}{j}$ ways to choose $X_i^j$, since we are free to choose
$k$ or $\bar{k}$ for each $k\in\{i+1,\ldots,n\}$ and we are also free to choose any $j$ numbers from $\{-1,-2,\ldots,-i+1\}$.
Furthermore, Lemma~\ref{lem-2AB} tells us that for different choices of $X_i^j$ the generating polynomials
$$\sum_{\pi\in\mathfrak{T}_{n,X_{i}^j}}t^{\des_B(w)}$$
have the same value $\mathbf{A}_{n,i-j}(t)$. Thus
    \begin{align}\label{eq-Bni}
		\mathbf{B}_{n,i}(t)=2^{\,n-i}\sum_{j=0}^{i-1} \binom{i-1}{j}\mathbf{A}_{n,i-j}(t),
	\end{align}
	as desired. This completes the proof.
\end{proof}

The reader may note that the index $i$ appearing in Theorem \ref{thm-AB} is a positive integer.
In fact, due to the symmetry of signed permutations, we can get the following lemma, for which we provide a proof for completeness.

\begin{lemma}\label{lem-B}
	For any $1\leq i\leq n$, we have
	\begin{align*}
		\mathbf{B}_{n,-i}(t)=t^n\mathbf{B}_{n,i}(t^{-1}).
	\end{align*}
\end{lemma}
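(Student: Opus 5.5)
The plan is to use the negation involution $\pi\mapsto -\pi:=(-\pi_1)(-\pi_2)\cdots(-\pi_n)$ on $\mathfrak{B}_n$. It is clearly a bijection from $\mathfrak{B}_{n,i}$ onto $\mathfrak{B}_{n,-i}$, so it suffices to show that
\[
\des_B(-\pi)=n-\des_B(\pi)\qquad\text{for every }\pi\in\mathfrak{B}_n .
\]
Summing over $\pi\in\mathfrak{B}_{n,i}$ then gives
\[
\mathbf{B}_{n,-i}(t)=\sum_{\pi\in\mathfrak{B}_{n,i}}t^{\des_B(-\pi)}=\sum_{\pi\in\mathfrak{B}_{n,i}}t^{n-\des_B(\pi)}=t^n\mathbf{B}_{n,i}(t^{-1}).
\]

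To verify the descent identity, I will use the convention $\pi_0=0$, so that $(-\pi)_0=0$ as well. Consider the $n$ positions $k\in\{0,1,\ldots,n-1\}$. For each such $k$, the entries $\pi_k$ and $\pi_{k+1}$ are distinct, because their absolute values are distinct (and when $k=0$ we have $\pi_0=0\neq\pi_1$). Hence exactly one of $\pi_k>\pi_{k+1}$ and $\pi_k<\pi_{k+1}$ holds. Negation reverses this inequality, since $-\pi_k>-\pi_{k+1}$ if and only if $\pi_k<\pi_{k+1}$, and this remains true at $k=0$ because $-0=0$. Therefore $k\in\Des_B(-\pi)$ exactly when $k\notin\Des_B(\pi)$. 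So $\Des_B(-\pi)$ is the complement of $\Des_B(\pi)$ in $\{0,\ldots,n-1\}$, and $\des_B(-\pi)=n-\des_B(\pi)$.

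There is no real obstacle here. The one point that needs care is position $0$: the complementation must include the comparison with $\pi_0=0$. This is exactly why the exponent is $n$ rather than $n-1$, and it holds because $\pi_1\neq 0$ and negation fixes $0$.
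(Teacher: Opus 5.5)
Your proof is correct and follows the paper's argument: the paper uses the same negation map $\psi$ from $\mathfrak{B}_{n,i}$ onto $\mathfrak{B}_{n,-i}$ and the identity $\des_B(\pi)+\des_B(\psi(\pi))=n$, which it leaves as straightforward to check. Your complementation argument on $\{0,1,\ldots,n-1\}$, including the care taken at position $0$ where $\pi_0=0$, supplies exactly the verification the paper omits.
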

\begin{proof}
	Define a map $\psi:\mathfrak{B}_{n,i}\rightarrow \mathfrak{B}_{n,-i}$ by
	\begin{align*}
		\psi(\pi_1\pi_2\cdots \pi_{n-1}\pi_n)=(-\pi_1)(-\pi_2)\cdots (-\pi_{n-1})(-\pi_n).
	\end{align*}
	It is straightforward to check that $\des_B(\pi)+\des_B(\psi(\pi))=n$. This implies that
	\begin{align*}
		\mathbf{B}_{n,-i}(t)=\sum_{\pi\in \mathfrak{B}_{n,-i}}t^{\des_B(\pi)}=\sum_{\pi\in \mathfrak{B}_{n,i}}t^{n-\des_B(\pi)}=t^n\mathbf{B}_{n,i}(t^{-1}).
	\end{align*}
The proof is complete.
\end{proof}
Based on Lemma \ref{lem-B} and Theorem \ref{thm-AB}, 
we can also express $\mathbf{B}_{n,i}(t)$ in terms of the refined Eulerian polynomials $\mathbf{A}_{n,r}(t)$ for any negative integer $i$.

For our purpose we also need the following lemma.
\begin{lemma}\label{lem-general-1}
For any positive integer  $k\geq 1$, let $X=\{a_1,a_2,\ldots,a_k\}$ be a $k$-element set of integers with
$|a_1|<|a_2|<\ldots<|a_k|$. If we  let $\mathfrak{R}_{X}$ be the set of permutations of $X$ ending with $a_1$, then
\begin{align}\label{eq-gf-rx}
\sum_{\pi\in\mathfrak{R}_{X}}t^{\des_B(\pi)}=\left\{
\begin{array}{ll}
1, & \mbox{ if $k=1$ and $a_1>0$}\\
t, & \mbox{ if $k=1$ and $a_1<0$}\\
tA_{k-1}(t), & \mbox{ if $k\geq 2$}.
\end{array}
\right.
	\end{align}
\end{lemma}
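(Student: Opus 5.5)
We first reduce to a standard alphabet. Since $\des_B(\pi)=\des_B(\mathrm{red}(\pi))$ by definition, and reduction maps permutations of $X$ ending with $a_1$ bijectively onto permutations of $\mathrm{red}(X)$ ending with $\mathrm{red}(a_1)=\pm 1$, we may assume that $X\subseteq\{\pm1,\ldots,\pm k\}$ contains exactly one of $r,-r$ for each $r$, and that $a_1=\pm1$.

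The case $k=1$ is immediate. The only permutation is $\pi=a_1$, with $\pi_0=0$. Hence $\des_B(\pi)=0$ if $a_1>0$, and $\des_B(\pi)=1$ if $a_1<0$.

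Now let $k\ge 2$, and first suppose $a_1=1$. Then $X$ satisfies the hypotheses of Lemma~\ref{lem-2AB} with $n=k$, $i=1$, $j=0$. (Equivalently, it satisfies those of Lemma~\ref{lem-1AB} with $i=1$.) Therefore
\[
\sum_{\pi\in\mathfrak{R}_X}t^{\des_B(\pi)}=\mathbf{A}_{k,1}(t).
\]
It remains to see that $\mathbf{A}_{k,1}(t)=tA_{k-1}(t)$. For $\sigma\in\mathfrak{S}_{k,1}$ with $k\ge2$, the entry $\sigma_{k-1}>1=\sigma_k$, so position $k-1$ is always a descent. The prefix $\sigma_1\cdots\sigma_{k-1}$ ranges over all permutations of $\{2,\ldots,k\}$, and its descents are counted by $A_{k-1}(t)$ after reduction. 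Thus $\des(\sigma)=1+\des(\mathrm{red}(\sigma_1\cdots\sigma_{k-1}))$, which gives $\mathbf{A}_{k,1}(t)=tA_{k-1}(t)$.

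Next suppose $a_1=-1$. We use the negation map from the proof of Lemma~\ref{lem-B}, applied to the alphabet $X$. It sends $\pi\mapsto-\pi$, which is a bijection from $\mathfrak{R}_X$ to $\mathfrak{R}_{-X}$. It satisfies $\des_B(\pi)+\des_B(-\pi)=k$, since each of the $k$ comparisons $\pi_{r}$ versus $\pi_{r+1}$ for $0\le r\le k-1$ (with $\pi_0=0$) is reversed. Since $-X$ contains $1$, the previous case gives
\[
\sum_{\pi\in\mathfrak{R}_X}t^{\des_B(\pi)}=t^k\cdot t^{-1}A_{k-1}(t^{-1})=t^{k-1}A_{k-1}(t^{-1}).
\]
Finally, we invoke the classical palindromicity $A_{k-1}(t)=t^{k-2}A_{k-1}(t^{-1})$, which follows from the complement map $\sigma_r\mapsto k-\sigma_r$ on $\mathfrak{S}_{k-1}$. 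This yields $t\,A_{k-1}(t)$, as claimed.

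The only delicate point is making sure the reduction step is legitimate, so that Lemma~\ref{lem-2AB} (stated for subsets of $\{\pm1,\ldots,\pm n\}$) applies to an arbitrary $X$. This is handled by the definition of $\des_B$ via $\mathrm{red}$. The rest is routine.
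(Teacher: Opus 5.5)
Your proof is correct. It follows the paper for $k=1$ and for $a_1>0$: reduce, then apply Lemma~\ref{lem-1AB} with $i=1$. You also verify $\mathbf{A}_{k,1}(t)=tA_{k-1}(t)$, which the paper only asserts.

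The two arguments differ in the case $a_1<0$.

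The paper flips only the sign of the last letter, $\eta(\pi_1\cdots\pi_{k-1}a_1)=\pi_1\cdots\pi_{k-1}\overline{a_1}$. This map preserves $\des_B$ outright, for two reasons:
\begin{itemize}
\item $|a_1|$ is minimal, so $\pi_{k-1}>a_1$ if and only if $\pi_{k-1}>\overline{a_1}$;
\item $k\ge 2$ keeps $a_1$ out of the comparison with $\pi_0=0$.
\end{itemize}

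You instead negate the whole word, which turns $\des_B$ into $k-\des_B$. To get back to $tA_{k-1}(t)$ you then need the palindromicity $A_{k-1}(t)=t^{k-2}A_{k-1}(t^{-1})$.

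Both routes are valid. The paper's is more local. It gives a direct $\des_B$-preserving bijection $\mathfrak{R}_X\to\mathfrak{R}_{\bar X}$, in keeping with the bijective aim of that section, and needs no symmetry of Eulerian numbers. Yours reuses the global negation symmetry from Lemma~\ref{lem-B}, at the cost of one extra classical fact. You justify that fact yourself by complementation, so nothing is left unproved.
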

\begin{proof}
The case of $k=1$ is trivial. We may assume that $k\geq 2$.
Let $\bar{X}=\{\overline{a_1},a_2,\ldots,a_k\}$. We claim that
$$\sum_{\pi\in\mathfrak{R}_{X}}t^{\des_B(\pi)}=\sum_{\pi\in\mathfrak{R}_{\bar{X}}}t^{\des_B(\pi)}.$$ To prove this claim, consider the canonical bijection $\eta$ from $\mathfrak{R}_{X}$ to $\mathfrak{R}_{\bar{X}}$ defined by $\eta(\pi_1\cdots\pi_{k-1}a_1)=\pi_1\cdots\pi_{k-1}\overline{a_1}$.
Since $k\geq 2$ and $|a_1|<|a_2|<\ldots<|a_k|$, we have $\pi_{k-1}>a_1$ iff $\pi_{k-1}>\overline{a_1}$. Thus, $\des_B(\pi)=\des_B(\eta(\pi))$ for any $\pi\in \mathfrak{R}_{X}$, and hence the claim is valid.
If $a_1>0$, then we find that
$\{\mathrm{red}(\pi) | \pi\in \mathfrak{R}_{X}\}=\mathfrak{R}_{\mathrm{red}(X)}$.
By definition the reduction set $\mathrm{red}(X)$ coincides with some set $X_1$ satisfying the conditions of Lemma \ref{lem-1AB}. In this case,
we have
\begin{align*}
\sum_{\pi\in\mathfrak{R}_{X}}t^{\des_B(\pi)}=\sum_{\pi\in\mathfrak{R}_{\mathrm{red}(X)}}t^{\des_B(\pi)}=\sum_{\pi\in\mathfrak{R}_{k,X_1}}t^{\des_B(\pi)}
=
\mathbf{A}_{k,1}(t)=tA_{k-1}(t).
\end{align*}
\end{proof}


We can now  present our bijective proof of Theorem \ref{thm-Zhang}. 


\begin{proof}[Proof of Theorem \ref{thm-Zhang}]
Let  us first rewrite \eqref{equ-ABZhang} as
	\begin{align}\label{equ-ABZhang-r}
		B_n(t)=\sum_{m_1+2m_2+\cdots+nm_n=n}
        \frac{n!}{\prod_{j=1}^n j!^{m_j}\cdot m_j!}\cdot(t+1)^{m_1} \prod_{i=1}^{n-1}\left({tA_i(t)2^{i+1}}\right)^{m_{i+1}}.
	\end{align}
In order to give a bijective proof, it is desirable to give a combinatorial interpretation of the right hand side of \eqref{equ-ABZhang-r}.
Let $\Pi_n$ be the set of all partitions of the set $[n]$.
We say that a set partition of $[n]$ is of type $\mathbf{m}=(m_1,\ldots,m_n)$ if it has $m_i$ blocks of size $i$, for $1\leq i\leq n$. Let $\Pi_n(\mathbf{m})$ be the set of all partitions $\{T_1,\ldots,T_r\}$ of $[n]$ of type $\mathbf{m}$, where $r=m_1+\cdots+m_n$.
Let $\mathfrak{F}_n$ denote the set of signed set partitions of $[n]$ with each block $S_k$ being a sequence ending with its entry which has the smallest absolute value. For example,
$\{{2},\{1,3\}\}$ is a set partition in $\Pi_3$,  $\{(-2),(-3,1)\}$ is a signed set partition in $\mathfrak{F}_3$, but $\{(2),(1,-3)\}\not\in \mathfrak{F}_3$.

The well known equality
$|\Pi_n(\mathbf{m})|=\frac{n!}{\prod_{j=1}^n j!^{m_j}\cdot m_j!}$ and Lemma \ref{lem-general-1} indicate that it is plausible to weight each block $T$ of size $i$ by
\begin{align}\label{eq-wt-t}
\mathrm{wt}(T)=\sum_{X\in \mathfrak{C}_T}\sum_{\pi\in\mathfrak{R}_{X}}t^{\des_B(\pi)}.
\end{align}
In fact, if $i=1$, say $T=\{a\}$, then $\mathfrak{C}_T=\{a,-a\}$ and $\sum_{X\in \mathfrak{C}_T}\sum_{\pi\in\mathfrak{R}_{X}}t^{\des_B(\pi)}=(1+t)$ by Lemma $\ref{lem-general-1}$. If $i\geq 2$, then
$|\mathfrak{C}_T|=2^{i}$ and for each $X\in \mathfrak{C}_T$
we have $\sum_{\pi\in\mathfrak{R}_{X}}t^{\des_B(\pi)}=tA_{i-1}(t)$. Therefore, the right hand side of \eqref{equ-ABZhang-r}
can be interpreted as
$$
\sum_{\{T_1,\ldots,T_r\}\in \Pi_n}\mathrm{wt}(T_1)\cdots \mathrm{wt}(T_r).
$$
In view of the weight function $\mathrm{wt}$ defined by \eqref{eq-wt-t}, this can be further written as
$$\sum_{\{S_1,\ldots,S_r\}\in \mathfrak{F}_n} t^{\des_B(S_1)}\cdots t^{\des_B(S_r)}.$$

Now it remains to give a bijective proof of the following identity
$$\sum_{\pi\in\mathfrak{B}_n}t^{\des_B(\pi)}=\sum_{\{S_1,\ldots,S_r\}\in \mathfrak{F}_n} t^{\des_B(S_1)+\cdots+\des_B(S_r)}.$$

Given a signed permutation $\pi = \pi_1 \pi_2 \cdots \pi_n \in \mathfrak{B}_n$, insert a right parenthesis in $\pi$ after every right-to-left absolute minimum; that is,
an element $\pi_i$ such that $|\pi_i|<|\pi_j|$ for every $j>i$.
Then insert a left parenthesis where
appropriate; that is, before every internal right parenthesis and at the beginning.
In this way $\pi$ is decomposed into an ordered sequence of nonempty blocks, say
\begin{align}\label{eq-decomp}
\pi= \pi^{(1)} \mid \pi^{(2)}\mid \cdots \mid \pi^{(r)}.
\end{align}
By our construction, $\{\pi^{(1)},\pi^{(2)},\cdots, \pi^{(r)}\}$ is a legal member of $\mathfrak{F}_n$. Define the map $\phi: \mathfrak{B}_n \rightarrow \mathfrak{F}_n$ by setting $\phi(\pi)=\{\pi^{(1)},\pi^{(2)},\ldots, \pi^{(r)}\}$.
Note that there will be a descent between the last entry of $\pi^{(i)}$ and the first entry of $\pi^{(i+1)}$ for $1\leq i\leq r-1$ if and only if the latter is negative.
Now it is routine to verify that
\[
\des_B(\pi)= \des_B(\pi^{(1)}) + \des_B(\pi^{(2)})+ \cdots + \des_B(\pi^{(r)}).
\]
For example, with $\pi=\bar{3}4\bar{1}59\bar{7}2\bar{8}6\in\mathfrak{B}_9$ we have
    $\phi(\pi)=\{\bar{3}4\bar{1},59\bar{7}2,\bar{8}6\}$. By definition one can verify that
$$\des_B(\bar{3}4\bar{1}59\bar{7}2\bar{8}6)=4,\, \des_B(\bar{3}4\bar{1})=2,\,\des_B(59\bar{7}2)=1,\,\des_B(\bar{8}6)=1.$$
To show that $\phi$ is bijective, we construct a map $\psi:\mathfrak{F}_n \rightarrow \mathfrak{B}_n$
in the following way: if $\{S_1,\ldots,S_r\}\in \mathfrak{F}_n$, we write these block sequences in increasing order of their smallest abolute values and then let
$\psi(\{S_1,\ldots,S_r\})$ denote the concatenation
of these ordered sequences. One can check that both $\phi\circ\psi$ and $\psi\circ \phi$ are identity maps. This completes the proof.

\end{proof}



\section{Proof of Theorem \ref{thm-BDhalf}}\label{halfBD}

In this section, we aim to prove Theorem \ref{thm-BDhalf}.  As a corollary, we derive a new relation between Eulerian polynomials of type $A$ and type $B$. Furthermore, we establish a corresponding relation for the restricted Eulerian polynomials of type $D$, and propose one open problem for future investigation.

In \cite{S23}, Santocanale introduced the notion of smooth permutations:	a signed permutation $\pi=\pi_1\pi_2\cdots \pi_n$ is smooth if $\pi_1\cdot \pi_2>0$; and non-smooth otherwise. Santocanale proved that the smooth signed permutations having $k$ type~$B$ descents are in bijection with even signed permutations having $k$ type $D$ descents. Motivated by this result, we obtain the following refinement.
\begin{lemma}\label{lem-half-smooth}
	For any $0\leq k\leq n$, the smooth signed permutations in $\mathfrak{B}_n^{+}$ with $k$ type $B$ descents are in bijection with the permutations in $\mathfrak{D}_n^{+}$ with $k$ type~$D$ descents.
\end{lemma}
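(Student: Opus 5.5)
The plan is to adapt Santocanale's construction directly: change the sign of $\pi_1$ whenever that is needed for evenness, and check that the sign of $\pi_n$ and the descent number are unaffected. For $n=1$ the statement is checked directly: $\mathfrak{B}_1^+=\mathfrak{D}_1^+=\{1\}$, both with $0$ descents, and we regard the condition $\pi_1\pi_2>0$ as vacuous. So assume $n\geq 2$. Define $\theta$ on the smooth elements of $\mathfrak{B}_n^+$ as follows. If $\pi$ has an even number of negative entries, put $\theta(\pi)=\pi$. Otherwise put $\theta(\pi)=(-\pi_1)\pi_2\cdots\pi_n$. In both cases $\theta(\pi)\in\mathfrak{D}_n$. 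Since $n\geq 2$, the last entry is untouched, so $\theta(\pi)_n=\pi_n>0$ and $\theta(\pi)\in\mathfrak{D}_n^+$.

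Next we show $\theta$ is a bijection. Given $\sigma\in\mathfrak{D}_n^+$, its only possible preimages are $\sigma$ itself and $\sigma'=(-\sigma_1)\sigma_2\cdots\sigma_n$. Since $\sigma_1\sigma_2$ and $-\sigma_1\sigma_2$ have opposite signs, exactly one of $\sigma,\sigma'$ is smooth. If $\sigma$ is smooth, it has an even number of negatives, so $\theta(\sigma)=\sigma$. If $\sigma'$ is smooth, it has an odd number of negatives, so $\theta(\sigma')=\sigma$. Hence every $\sigma$ has exactly one smooth preimage in $\mathfrak{B}_n^+$, and $\theta$ is a bijection.

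It remains to show $\des_B(\pi)=\des_D(\theta(\pi))$. Positions $i\geq 2$ involve only $\pi_2,\ldots,\pi_n$, which are unchanged, and they are counted identically in $\des_B$ and $\des_D$. So it suffices to compare the contributions of positions $0$ and $1$. This is the only real computation, and smoothness gives four short cases.

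\emph{$\theta(\pi)=\pi$ with $\pi_1,\pi_2>0$.} Position $0$ contributes to neither $\des_B$ nor $\des_D$, since $\pi_1>0$ and $\pi_1+\pi_2>0$. Position $1$ is counted identically in both.

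\emph{$\theta(\pi)=\pi$ with $\pi_1,\pi_2<0$.} Position $0$ contributes to both, since $\pi_1<0$ and $\pi_1+\pi_2<0$. Position $1$ is again counted identically.

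\emph{$\pi_1$ is flipped and $\pi_1,\pi_2>0$.} In $\pi$, position $0$ is not a descent, and position $1$ is a descent iff $\pi_1>\pi_2$. In $\theta(\pi)$, position $0$ is a type $D$ descent iff $-\pi_1+\pi_2<0$, i.e.\ iff $\pi_1>\pi_2$. Position $1$ is never a descent, since $-\pi_1<\pi_2$. Both sides therefore contribute $[\pi_1>\pi_2]$.

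\emph{$\pi_1$ is flipped and $\pi_1,\pi_2<0$.} In $\pi$, position $0$ is a descent, and position $1$ is a descent iff $|\pi_1|<|\pi_2|$. In $\theta(\pi)$, position $0$ is a descent iff $|\pi_1|+\pi_2<0$, i.e.\ iff $|\pi_1|<|\pi_2|$. Position $1$ is always a descent, since $|\pi_1|>\pi_2$. Both sides contribute $1+[|\pi_1|<|\pi_2|]$.

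Thus $\theta$ preserves the number of descents, which gives the required bijection for each $k$.
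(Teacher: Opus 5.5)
Your proof is correct and is essentially the paper's argument: the same map (negate $\pi_1$ exactly when the number of negative entries is odd), the same inverse (keep whichever of $\sigma$ and $(-\sigma_1)\sigma_2\cdots\sigma_n$ is smooth), and the same four-case comparison of positions $0$ and $1$. Two points you make explicitly are left implicit in the paper: that $n\geq 2$ keeps $\pi_n$ untouched, and that the case $n=1$ needs a convention for smoothness.
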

\begin{proof}
	Let $X=\{\pi\in\mathfrak{B}_n^{+}\mid \des_B(\pi)=k\mbox{ and $\pi$ is smooth }\}$ and $Y=\{\pi\in\mathfrak{D}_n^{+}\mid \des_D(\pi)=k\}$.
	Let us define a map $\phi:X\to Y$ as follows.
	For any $\pi\in X$, if $\pi$ is a signed permutation with even number of negative entries, define $\phi(\pi)=\pi$. Clearly, $\phi(\pi)\in\mathfrak{D}_n^{+}$. Since $\pi$ is smooth, $\pi_1+\pi_2<0$ if and only if $\pi_1<0$; similarly, $\pi_1+\pi_2>0$ if and only if $\pi_1>0$.
	Hence, $0\in\Des_{B}(\pi)$ if and only if $0\in \Des_D(\pi)$, which implies $\des_D(\phi(\pi))=\des_B(\pi)$.
	
	If $\pi=\pi_1\pi_2\cdots \pi_n\in X$ is a signed permutation with odd number of negative entries, define $\phi(\pi)=(-\pi_1)\pi_2\cdots \pi_n$. Then $\phi(\pi)\in\mathfrak{D}_n^{+}$. To verify $\des_B(\pi)=\des_D(\phi(\pi))$, we analyze the following cases:
	\begin{itemize}
		\item $\pi_1>\pi_2>0$: $0\not\in\Des_B(\pi)$ but $1\in\Des_B(\pi)$. In this case, we have $-\pi_1<\pi_2$ and $-\pi_1+\pi_2<0$, so $0\in\Des_D(\phi(\pi))$ but $1\not\in\Des_D(\phi(\pi))$;
		\item $\pi_2>\pi_1>0$: $\{0,1\}\not\in\Des_B(\pi)$. In this case, we have $-\pi_1<\pi_2$ and $ -\pi_1+\pi_2>0$, so $\{0,1\}\not\in\Des_D(\phi(\pi))$;
		\item $\pi_2<\pi_1<0$: $\{0,1\}\in\Des_B(\pi)$. In this case, we have $-\pi_1>\pi_2$ and $-\pi_1+\pi_2<0$, so $\{0,1\}\in\Des_D(\phi(\pi))$;
		\item $\pi_1<\pi_2<0$: $0\in\Des_B(\pi)$ but $1\not\in\Des_B(\pi)$. In this case, we have $-\pi_1>\pi_2$ and $-\pi_1+\pi_2>0$, so $0\not\in\Des_D(\phi(\pi))$ but $1\in\Des_D(\phi(\pi))$.
	\end{itemize}
	In all cases, the map $\phi$ preserves the number of descents. So, $\phi(w)\in Y$.
	To confirm $\phi$ is a bijection, we define the inverse map $\phi^{-1}:Y\to X$ by analysing the first two elements of $\pi\in Y$.
	If $\pi_1\cdot \pi_2>0$, set $\phi^{-1}(\pi)=\pi$. If $\pi_1\cdot \pi_2<0$, set $\phi^{-1}(\pi) = (-\pi_1)\pi_2\cdots \pi_n$. It is straightforward to verify that $\phi^{-1}$ is indeed the inverse of $\phi$.
	This completes the proof.
\end{proof}

We now turn to study of non-smooth signed permutations.

\begin{lemma}\label{lem-nonsmooth}
	For any $0\leq k\leq n$, the set of non-smooth signed permutations in $\mathfrak{B}_n^{+}$ with $k$ type $B$ descents is in bijection with the set of pairs $(u,\sigma)$, where $u\in [n]$ and $\sigma\in \mathfrak{B}_{n-1}^{-}$ with $\des_B(\sigma)=k$.
\end{lemma}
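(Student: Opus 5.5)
The plan is to build the bijection by deleting the first letter. Let $\pi=\pi_1\pi_2\cdots\pi_n\in\mathfrak{B}_n^{+}$ be non-smooth, so $n\geq 2$ and $\pi_1\pi_2<0$. I record $u=|\pi_1|\in[n]$ and set $\tau=\mathrm{red}(\pi_2\cdots\pi_n)\in\mathfrak{B}_{n-1}$, which still ends with a positive entry. It then remains to turn $\tau$ into some $\sigma\in\mathfrak{B}_{n-1}^{-}$ with $\des_B(\sigma)=\des_B(\pi)$.

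The first step compares descents. Positions $0$ and $1$ of $\pi$ always contribute exactly one descent. If $\pi_1<0<\pi_2$, the descent is at $0$. If $\pi_1>0>\pi_2$, the descent is at $1$. All later positions of $\pi$ match those of $\tau$, and position $0$ of $\tau$ is a descent iff $\pi_2<0$. Hence
\[
\des_B(\pi)=\des_B(\tau)+1 \quad\text{if } \pi_2>0,\qquad \des_B(\pi)=\des_B(\tau)\quad\text{if } \pi_2<0 .
\]

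Next I apply the reverse–negate map $\rho(\tau)=(-\tau_{m})\cdots(-\tau_1)$ with $m=n-1$. Descents of $\rho(\tau)$ at positions $i\geq1$ correspond to descents of $\tau$ at positions $m-i\geq 1$, and position $0$ of $\rho(\tau)$ is a descent iff $\tau_m>0$, which always holds here. So $\des_B(\rho(\tau))=\des_B(\tau)-[\tau_1<0]+1$. In the case $\pi_2>0$, this gives $\sigma=\rho(\tau)$ with $\des_B(\sigma)=\des_B(\pi)$ and last entry $-\tau_1<0$. Thus $\sigma\in\mathfrak{B}_{n-1}^{-}$, and moreover $\sigma_1=-\tau_m<0$. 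This case is reversible, so for fixed $u$ it bijects onto $\{\sigma\in\mathfrak{B}_{n-1}^{-}:\sigma_1<0\}$ (including the degenerate $n=2$ case).

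For fixed $u$, the case $\pi_2<0$ must therefore biject, preserving $\des_B$, onto $\{\sigma\in\mathfrak{B}_{n-1}^{-}:\sigma_1>0\}$. The source is the set of $\tau$ that start negative, end positive, and satisfy $\des_B(\tau)=\des_B(\pi)$. Applying $\rho$ converts this to words that start negative and end positive with the same $\des_B$, so the remaining task is a $\des_B$-preserving bijection between "start negative, end positive" and "start positive, end negative" in $\mathfrak{B}_{n-1}$. I expect this to be the main obstacle. I would first try an analogue of the first-letter sign flip of Lemma~\ref{lem-half-smooth}, i.e.\ changing the signs of the first and last letters while tracking the four local configurations, or a cyclic rotation of the letters. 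If no such direct map works, I would fall back on recursion in $n$ via Lemma~\ref{lem-half-smooth}: first show equinumerosity by comparing generating functions, then make it bijective. Finally, as $u$ ranges over $[n]$, the pieces assemble into the claimed bijection with pairs $(u,\sigma)$.
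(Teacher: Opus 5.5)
Your setup is sound as far as it goes. Recording $u=|\pi_1|$, the comparison $\des_B(\pi)=\des_B(\tau)+[\pi_2>0]$ with $\tau=\mathrm{red}(\pi_2\cdots\pi_n)$, and the reverse--negate map $\rho$ together handle the case $\pi_2>0$ correctly. For each fixed $u$ they give a $\des_B$-preserving bijection onto $\{\sigma\in\mathfrak{B}_{n-1}^-:\sigma_1<0\}$. You also reduce the case $\pi_2<0$ correctly: what is needed is a $\des_B$-preserving bijection from the signed permutations of $[n-1]$ that start negative and end positive to those that start positive and end negative.

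That bijection is the whole content of the lemma, and you do not supply it, so the proof has a genuine gap. Your first candidate fails: flipping the signs of the first and last letters sends $\bar{3}12$ (one type $B$ descent) to $31\bar{2}$ (two). The other obvious symmetries fail too. $\rho$ maps this class to itself, while plain reversal and global negation reach the target class but send $\des_B$ to $(n-1)-\des_B$. The generating-function fallback is only named, not carried out.

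The missing idea, which is the paper's construction, is to complement absolute values while flipping signs. Send each letter $x$ of a signed permutation of $[n-1]$ to the letter $x'$ with $|x'|=n-|x|$ and $x'x<0$. This keeps the relative order of two letters of the same sign and reverses the comparison between two letters of opposite sign.

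Now take $\tau$ starting negative and ending positive. Let $W$ be its number of descents between adjacent letters of equal sign, and $p$ its number of adjacent sign changes from positive to negative, so there are $p+1$ from negative to positive. Then $\des_B(\tau)=1+W+p$. Its image starts positive, ends negative, and has exactly $W+(p+1)$ type $B$ descents, so $\des_B$ is preserved.

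The same letterwise map also covers the case where $\tau$ starts and ends positive. There the two kinds of sign change are equal in number, say $p$ each, so the image has $1+W+p$ descents against $W+p$ for $\tau$. This matches $\des_B(\pi)=\des_B(\tau)+1$. Hence the paper uses the single map $\pi\mapsto(|\pi_1|,\phi_n(\widetilde{\pi_2})\cdots\phi_n(\widetilde{\pi_n}))$ with no case split, and your $\rho$ is a valid alternative for the first case only.
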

\begin{proof}
	We construct the map $\Phi$ from $\{\pi\in\mathfrak{B}_n^{+}\mid \des_B(\pi)=k, \pi \mbox{ is non-smooth} \}$ to $\{(u,\sigma)\in [n]\times\mathfrak{B}_{n-1}^{-}\mid \des_B(\sigma)=k\}$ via two steps as follows.
	Let $\pi=\pi_1\pi_2\cdots \pi_n\in \mathfrak{B}_n^{+}$.
	First, we define
	\begin{align*}
		\phi_1(\pi_1\pi_2\cdots \pi_n)=(|\pi_1|,\widetilde{\pi_2}\cdots \widetilde{\pi_n}),
	\end{align*}
	where $\widetilde{\pi_2}\cdots \widetilde{\pi_n}$ is the reduction of $\pi_2\pi_3\cdots \pi_n$ (see the definition of reduction preceding \eqref{eq-ab-for-red}).
	Second, define a transformation $\phi_n$ on a letter $x$ by
	\begin{align*}
		\phi_n(x)=x', \mbox{where $|x'|=n-|x|$ and $x'\cdot x<0$.}
	\end{align*}
	Now, define
	$$\Phi(\pi_1\pi_2\cdots \pi_n)=(|\pi_1|,\phi_n(\widetilde{\pi_2})\cdots \phi_n(\widetilde{\pi_n})).$$

	For example, take $\pi=4\bar{5} \bar{3}18\bar{6}72$ (here $\bar{a}=-a$). The standardization of $\pi_2\cdots \pi_8=\bar{5}\bar{3}18\bar{6}72$ is $\widetilde{\pi_2}\cdots \widetilde{\pi_8}=\bar{4}\bar{3}17\bar{5}62$.
	Applying $\phi_8$ to each letter gives $\phi_8(\widetilde{\pi_2})=4, \phi_8(\widetilde{\pi_3})=5, \phi_8(\widetilde{\pi_4})=\bar{7}, \phi_8(\widetilde{\pi_5})=\bar{1}, \phi_8(\widetilde{\pi_6})=3, \phi_8(\widetilde{\pi_7})=\bar{2}$ and $\phi_8(\widetilde{\pi_8})=\bar{6}$. Thus, $\Phi(4\bar{5}\bar{3}18\bar{6}72)=(4,45\bar{7}\bar{1}3\bar{2}\bar{6}).$
	
	By this construction, $\Phi(\pi)\in[n]\times \mathfrak{B}_{n-1}^-$.
	We also need to check $\des_B(\phi_n(\widetilde{\pi_2}) \cdots \phi_n(\widetilde{\pi_n}))=k$.
    Observe that reduction preserves relative order: for any maximal block $\pi_r \pi_{r+1} \cdots \pi_{r+\ell}$ inside $\pi_2 \cdots \pi_n$ consisting of entries with the same sign, the relative order within the block is unchanged in $\widetilde{\pi_2}\cdots \widetilde{\pi_n}$, and moreover, is preserved under the map $\phi_n$.
	Thus, descent changes could only occur between two consecutive entries with opposite signs. There are two possible cases: $\pi_1<0$, $\pi_2>0$ and $\pi_1>0$, $\pi_2<0$. In both situations, an argument identical to that used in the proof of Lemma~\ref{lem-2AB} shows that the number of type $B$ descents remain unchanged after applying $\Phi$.
Thus, $\Phi(\pi) \in \{(u,\sigma)\in [n]\times\mathfrak{B}_{n-1}^{-}\mid \des_B(\sigma)=k\}$.
Conversely, it is straightforward to define the inverse of $\Phi$ by reversing the above steps.
This completes the proof.
\end{proof}

		

Combining Lemma \ref{lem-half-smooth} and Lemma \ref{lem-nonsmooth}, we now prove Theorem \ref{thm-BDhalf} by analysing the smooth and non-smooth permutations in $\mathfrak{B}_n^+$, separately.
\begin{proof}[Proof of Theorem \ref{thm-BDhalf}]
	Any $w\in\mathfrak{B}_n^+$ is either smooth or non-smooth. Lemma \ref{lem-half-smooth} and Lemma \ref{lem-nonsmooth} imply
	\begin{align*}
		B_n^+(t)=D_n^+(t)+nB_{n-1}^-(t).
	\end{align*}
A similar argument to that of Lemma \ref{lem-half-smooth} establishes a bijection between the smooth signed permutations in $\mathfrak{B}_n^{-}$ with $k$ type $B$ descents and those of the permutations in $\mathfrak{D}_n^{-}$ with $k$ type~$D$ descents. Likewise, following  Lemma \ref{lem-nonsmooth}, the set of non-smooth signed permutations in $\mathfrak{B}_n^{-}$ with $k$ type $B$ descents is in bijection with the set of pairs $(u,\sigma)$, where $u\in [n]$ and $\sigma\in \mathfrak{B}_{n-1}^{+}$ with $\des_B(v)=k-1$. Consequently,
	\begin{align*}
		B_n^-(t)=D_n^-(t)+ntB_{n-1}^+(t),
	\end{align*}
	which completes the proof.
\end{proof}

Next, we present a new relation between Eulerian polynomials of type $A$ and type $B$ as a corollary of Theorem \ref{thm-BDhalf}.
To this end, we first establish the following lemma:
\begin{lemma}\label{lem-ABhalf}
	For any positive integer $n$, we have
	\begin{align}\label{equ-AB+-}
		2^{n}t A_{n}(t) = B_{n}^-(t) + t  B_{n}^+(t).
	\end{align}
\end{lemma}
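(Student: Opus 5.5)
The plan is to read the right-hand side of \eqref{equ-AB+-} as a single generating function over $\mathfrak{B}_n$ and then apply Lemma~\ref{lem-general-1} once for each choice of signs. For $\pi\in\mathfrak{B}_n$ we have
$$B_n^-(t)+tB_n^+(t)=\sum_{\pi\in\mathfrak{B}_n}t^{\des_B(\pi)+\chi(\pi_n>0)},$$
where $\chi(\pi_n>0)$ is $1$ if $\pi_n>0$ and $0$ otherwise. The extra statistic $\chi(\pi_n>0)$ is exactly the descent we would get by appending a $0$ after $\pi_n$. So we want to realize $\pi$ followed by a smallest-absolute-value positive letter as a signed permutation ending with its entry of smallest absolute value, which is the setting of Lemma~\ref{lem-general-1}.

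\textbf{Step 1 (the lifting map).} Given $\pi=\pi_1\cdots\pi_n\in\mathfrak{B}_n$, define $\pi'=\pi'_1\cdots\pi'_n\,1$, where $\pi'_r=\pi_r+1$ if $\pi_r>0$ and $\pi'_r=\pi_r-1$ if $\pi_r<0$. In other words, each absolute value is increased by one and each sign is kept.
\begin{itemize}
\item Shifting preserves the relative order of any two nonzero entries, so it preserves every comparison $\pi_r$ versus $\pi_{r+1}$.
\item It also preserves the sign of $\pi_1$, so the comparison with $\pi_0=0$ is unchanged.
\item The new last comparison satisfies $\pi'_n>1$ if and only if $\pi_n>0$.
\end{itemize}
Hence $\des_B(\pi')=\des_B(\pi)+\chi(\pi_n>0)$, where $\des_B(\pi')$ is computed via reduction as in the paper. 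Note that $\pi'$ is already reduced.

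\textbf{Step 2 (the image of the lifting map).} The map $\pi\mapsto\pi'$ is a bijection from $\mathfrak{B}_n$ onto the disjoint union, over the $2^n$ signed sets
$$X=\{1,\epsilon_2 2,\ldots,\epsilon_{n+1}(n+1)\},\qquad \epsilon_r\in\{\pm1\},$$
of the sets $\mathfrak{R}_X$ of permutations of $X$ ending with $1$. Here $1$ is the element of $X$ with smallest absolute value.

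\textbf{Step 3 (summing over $X$).} For each such $X$ we have $k=n+1\ge 2$, so Lemma~\ref{lem-general-1} gives
$$\sum_{\sigma\in\mathfrak{R}_X}t^{\des_B(\sigma)}=tA_n(t).$$
Summing over the $2^n$ choices of $X$ yields $2^n t A_n(t)$, which proves \eqref{equ-AB+-}.

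The only delicate point is the descent bookkeeping in Step 1, in particular checking that the first comparison $\pi'_1$ versus $0$ and the last comparison $\pi'_n$ versus $1$ behave as claimed. Both follow directly from the fact that the shift preserves signs and that every shifted entry has absolute value at least $2$.
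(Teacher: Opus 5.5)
Your proof is correct, and it takes a different route from the paper. The paper obtains \eqref{equ-AB+-} indirectly. It adds the two relations of Theorem~\ref{thm-BDhalf} to get $B_{n+1}(t)-D_{n+1}(t)=(n+1)\bigl(B_n^-(t)+tB_n^+(t)\bigr)$, compares this with Stembridge's identity \eqref{rela-ABD} for $n+1$, and divides by $n+1$. You instead argue directly and bijectively. Your shift-and-append map shows that $B_n^-(t)+tB_n^+(t)=\mathbf{B}_{n+1,1}(t)$. The bookkeeping in Step~1 is right, because $x\mapsto x+\mathrm{sgn}(x)$ is order-preserving on nonzero integers, preserves signs, and sends every entry to absolute value at least $2$. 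Step~3 is then exactly the case $i=1$ of Theorem~\ref{thm-AB}, namely $\mathbf{B}_{n+1,1}(t)=2^n\mathbf{A}_{n+1,1}(t)=2^ntA_n(t)$. Since $a_1=1>0$, you do not even need the sign-flipping part of Lemma~\ref{lem-general-1}, only Lemma~\ref{lem-1AB} with $i=1$. Your version is self-contained within the Section~2 machinery and avoids both the type $D$ analysis and the external identity \eqref{rela-ABD}. Moreover, feeding your independent proof into the paper's computation runs the argument backwards: it turns Theorem~\ref{thm-BDhalf} into a combinatorial proof of \eqref{rela-ABD} itself. The paper's route, by contrast, presents the lemma as an immediate corollary of its new half-Eulerian relation, which is the purpose of that section.
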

\begin{proof}
	By Theorem \ref{thm-BDhalf}, we derive
	\begin{align*}
		B_n(t) - D_n(t) &= \left(B_n^+(t) + B_n^-(t) \right)- \left(D_n^+(t)  + D_n^-(t)\right)\\
		&= \left(B_n^+(t)- D_n^+(t)\right)+\left( B_n^-(t)-D_n^-(t)\right)\\
		&=n B_{n-1}^-(t) + n t  B_{n-1}^+(t).
	\end{align*}
	By equation \eqref{rela-ABD}, we have
	\begin{align*}
		B_n(t) - D_n(t) = n 2^{n-1} t  A_{n-1}(t).
	\end{align*}
	Equating the two expressions for $B_n(t) - D_n(t)$, we have
	\begin{align*}
		n B_{n-1}^-(t) + n t B_{n-1}^+(t)= n 2^{n-1} t A_{n-1}(t).
	\end{align*}
	Dividing both sides by $n$ and reindexing $n$ to $n+1$ lead to equation \eqref{equ-AB+-}, as required.
\end{proof}

Based on equation \eqref{equ-AB+-}, and the following relations for $B_n^+(t)$ and $B_n^-(t)$ given by Hyatt in \cite{H16},
\begin{align}
	B_n^+(t)&=\sum_{j=0}^{n-1}\binom{n}{j}B_{j}(t)(t-1)^{n-j-1}, \label{equ-B+-1}\\
	B_n^-(t)&=t^nB_n^+(t^{-1})=t\cdot \sum_{j=0}^{n-1}\binom{n}{j}B_{j}(t)(1-t)^{n-j-1},\label{equ-B+-2}	
\end{align}
we derive the following new result.

\begin{proposition}
	Let $n$ be a positive integer. If $n$ is even, then
	\begin{align*}
		2^{n-1}A_{n}(t)=\sum_{r=0}^{\frac{n-2}{2}}\binom{n}{2r+1}B_{2r+1}(t)(t-1)^{n-2r-2}.
	\end{align*}
    If $n$ is odd, we have
	\begin{align*}
		2^{n-1}A_{n}(t)=\sum_{r=0}^{\frac{n-1}{2}}\binom{n}{2r}B_{2r}(t)(t-1)^{n-2r-1}.
	\end{align*}
\end{proposition}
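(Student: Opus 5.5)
Substitute Hyatt's formulas \eqref{equ-B+-1} and \eqref{equ-B+-2} into Lemma~\ref{lem-ABhalf} and compare. By Lemma~\ref{lem-ABhalf},
\begin{align*}
2^n tA_n(t)=B_n^-(t)+tB_n^+(t)
= t\sum_{j=0}^{n-1}\binom{n}{j}B_j(t)\left[(1-t)^{n-j-1}+(t-1)^{n-j-1}\right].
\end{align*}
Every term carries a factor $t$, so we divide by $t$. This yields a polynomial identity, because $t\neq 0$ generically and both sides are polynomials.

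Next, write $(1-t)^{n-j-1}=(-1)^{n-j-1}(t-1)^{n-j-1}$. The bracket then equals $\bigl(1+(-1)^{n-j-1}\bigr)(t-1)^{n-j-1}$. It vanishes when $n-j-1$ is odd and equals $2(t-1)^{n-j-1}$ when $n-j-1$ is even, that is, when $j\equiv n-1 \pmod 2$. Dividing by $2$ gives
\begin{align*}
2^{n-1}A_n(t)=\sum_{\substack{0\le j\le n-1\\ j\equiv n-1 \ (\mathrm{mod}\ 2)}}\binom{n}{j}B_j(t)(t-1)^{n-j-1}.
\end{align*}

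Finally, reindex according to the parity of $n$. If $n$ is even, the surviving $j$ are odd, $j=2r+1$ with $0\le r\le (n-2)/2$, and the exponent is $n-2r-2$. If $n$ is odd, the surviving $j$ are even, $j=2r$ with $0\le r\le (n-1)/2$, and the exponent is $n-2r-1$. These are exactly the two claimed formulas.

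The only step needing care is the second equality in \eqref{equ-B+-2}, namely $t^nB_n^+(t^{-1})=t\sum_j\binom{n}{j}B_j(t)(1-t)^{n-j-1}$. We take it as given from Hyatt. If we wanted to check it, we would use the palindromicity $t^jB_j(t^{-1})=B_j(t)$ together with $t^{n}(t^{-1}-1)^{n-j-1}=t^{j+1}(1-t)^{n-j-1}$. We also need the convention $B_0(t)=1$ for the $j=0$ term, which appears when $n$ is odd; the check $n=1$ gives $A_1=1=\binom{1}{0}B_0$.
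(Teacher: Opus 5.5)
Your proposal is correct and matches the paper's proof step for step. You substitute Hyatt's formulas \eqref{equ-B+-1} and \eqref{equ-B+-2} into Lemma~\ref{lem-ABhalf}, divide by $t$, rewrite $(1-t)^{n-j-1}+(t-1)^{n-j-1}=(1+(-1)^{n-j-1})(t-1)^{n-j-1}$, and reindex by the parity of $n$. Your check of the second equality in \eqref{equ-B+-2} via the palindromicity of $B_j(t)$ and your note on the convention $B_0(t)=1$ are both accurate, and they make explicit two points the paper leaves implicit.
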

\begin{proof}
	From equation \eqref{equ-AB+-}, we have
\begin{align*}
	2^{n}tA_{n}(t) = B_{n}^-(t) + t B_{n}^+(t).
\end{align*}
Substituting equation \eqref{equ-B+-1} and equation \eqref{equ-B+-2} into the right-hand side gives
\begin{align*}
	2^{n}tA_{n}(t)=t \sum_{j=0}^{n-1}\binom{n}{j}B_{j}(t)(1-t)^{n-j-1}+t\sum_{j=0}^{n-1}\binom{n}{j}B_{j}(t)(t-1)^{n-j-1}.
\end{align*}
Divide both sides by $t$ to obtain
\begin{align*}
	2^{n}A_{n}(t)&=\sum_{j=0}^{n-1}\binom{n}{j}B_{j}(t)\left((1-t)^{n-j-1}+(t-1)^{n-j-1}\right)\\
		 &=\sum_{j=0}^{n-1}\binom{n}{j}B_{j}(t)(t-1)^{n-j-1}\left(1+(-1)^{n-j-1}\right).
\end{align*}
The desired result follows by considering the parity of $n$.
\end{proof}

The remainder of this section focuses on the polynomials $\mathbf{D}_{n,i}(t)$.
To present the relation between restricted Eulerian polynomials of type $D$ and Eulerian polynomials of type $A$ and type $B$, we first establish a recurrence for $\mathbf{D}_{n,1}(t)$.
\begin{lemma}\label{lemma-D1}
    For any $n\geq 1$, we have
	\begin{align*}
		\mathbf{D}_{n,1}(t)=tD_{n-1}^+(t) + D_{n-1}^-(t).
	 \end{align*}
\end{lemma}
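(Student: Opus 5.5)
The plan is a direct bijection: delete the final entry $1$ and reduce what remains. Let $\pi=\pi_1\cdots\pi_{n-1}1\in\mathfrak{D}_{n,1}$. The word $\pi_1\cdots\pi_{n-1}$ is a signed permutation of a set $Y$ with exactly one of $\pm r$ in $Y$ for each $2\le r\le n$. Since the deleted entry $1$ is positive, the number of negative entries among $\pi_1,\ldots,\pi_{n-1}$ is still even. Put $\sigma=\mathrm{red}(\pi_1\cdots\pi_{n-1})$, which means replacing each entry $x$ by $x-1$ if $x>0$ and by $x+1$ if $x<0$. Then $\sigma\in\mathfrak{D}_{n-1}$, and the map $\pi\mapsto\sigma$ is a bijection $\mathfrak{D}_{n,1}\to\mathfrak{D}_{n-1}$; its inverse shifts absolute values up by one and appends $1$. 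Under this bijection, $\pi_{n-1}>0$ exactly when $\sigma\in\mathfrak{D}_{n-1}^+$.

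Next I would compare $\Des_D(\pi)$ with $\Des_D(\sigma)$ position by position, assuming $n\ge 3$.

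\emph{Interior positions $1\le i\le n-3$, i.e.\ the comparisons $\pi_i$ versus $\pi_{i+1}$ with $i+1\le n-2$.} Reduction preserves signs and the relative order of absolute values, so it preserves all comparisons $\pi_i>\pi_{i+1}$. Hence each such $i$ lies in $\Des_D(\pi)$ if and only if it lies in $\Des_D(\sigma)$.

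\emph{Position $0$.} We have $\pi_1+\pi_2<0$ exactly when at least one of $\pi_1,\pi_2$ is negative and the negative one has the larger absolute value. Both the signs and which absolute value is larger are preserved by reduction, so $0\in\Des_D(\pi)$ if and only if $0\in\Des_D(\sigma)$.

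\emph{Position $n-2$.} The comparison $\pi_{n-2}>\pi_{n-1}$ is likewise preserved by reduction.

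\emph{Position $n-1$.} This is the only new position. Since $|\pi_{n-1}|\ge 2$, we have $\pi_{n-1}>1$ if and only if $\pi_{n-1}>0$, which happens if and only if $\sigma_{n-1}>0$.

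Combining these, $\des_D(\pi)=\des_D(\sigma)+[\sigma\in\mathfrak{D}^+_{n-1}]$. Summing over the bijection gives
\[
\mathbf{D}_{n,1}(t)=\sum_{\sigma\in\mathfrak{D}_{n-1}^+}t^{\des_D(\sigma)+1}+\sum_{\sigma\in\mathfrak{D}_{n-1}^-}t^{\des_D(\sigma)}=tD_{n-1}^+(t)+D_{n-1}^-(t).
\]

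The only real obstacle is the small cases, where the type $D$ descent at $0$ refers to $\pi_2$ and may not make sense. Here I would adopt the natural conventions $D_0^+(t)=1$, $D_0^-(t)=0$ and $D_1^+(t)=1$, $D_1^-(t)=0$, and check the two cases by hand.

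\emph{Case $n=1$.} $\mathfrak{D}_{1,1}=\{1\}$, so $\mathbf{D}_{1,1}(t)=1$, which matches $D_0^-(t)=1\cdot 0+\ldots$ only under the convention that the empty permutation counts as negative; I would instead set $D_0^+(t)=0$ and $D_0^-(t)=1$, so that the right-hand side is $t\cdot 0+1=1$.

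\emph{Case $n=2$.} The only element is $\pi=21$. Since $2+1>0$, position $0$ is not a descent, and position $1$ is, so $\mathbf{D}_{2,1}(t)=t$. The right-hand side is $t\cdot D_1^+(t)+D_1^-(t)=t\cdot 1+0=t$, as required. For $n=2$ there is no position $0$ descent to transfer, because $\pi_1>0$ is forced by the even-sign condition.
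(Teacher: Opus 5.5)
Your proposal is correct and follows the same route as the paper. You remove the final $1$, reduce to an element of $\mathfrak{D}_{n-1}$, and note that position $n-1$ is a descent exactly when $\pi_{n-1}>0$; you also spell out the reduction bijection and check that the position-$0$ type $D$ descent survives reduction, which the paper leaves implicit. The one thing to tidy is the $n=1$ paragraph: you first announce $D_0^+(t)=1$, $D_0^-(t)=0$, but the identity actually requires the opposite convention $D_0^+(t)=0$, $D_0^-(t)=1$, which you do switch to.
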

\begin{proof}
	This result is obtained by analyzing the sign of $(n-1)$-th element in permutations belonging to $\mathfrak{D}_{n,1}$. If the $(n-1)$-th element is positive, a descent is introduced at position $n-1$, contributing $t D_{n-1}^+(t)$; If the $(n-1)$-th element is negative, no additional descent is created at position $n-1$, contributing $D_{n-1}^-(t)$. Summing these two contributions gives the desired identity.
\end{proof}

Combining Lemma \ref{lem-ABhalf}, Lemma \ref{lemma-D1}, and Theorem \ref{thm-BDhalf}, we are now ready to prove the following result.
\begin{theorem}\label{thm-DAB1}
	For any $n\geq 1$, we have
	\begin{align}\label{eq-DAB}
		\mathbf{D}_{n,1}(t)=t\cdot (2^{n-1}A_{n-1}(t)-(n-1)B_{n-2}(t)).
	\end{align}
\end{theorem}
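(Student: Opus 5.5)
Starting from Lemma~\ref{lemma-D1}, we have $\mathbf{D}_{n,1}(t)=tD_{n-1}^+(t)+D_{n-1}^-(t)$. Apply Theorem~\ref{thm-BDhalf} with $n-1$ in place of $n$ to eliminate the type $D$ half polynomials:
\begin{align*}
D_{n-1}^+(t)&=B_{n-1}^+(t)-(n-1)B_{n-2}^-(t),\\
D_{n-1}^-(t)&=B_{n-1}^-(t)-(n-1)tB_{n-2}^+(t).
\end{align*}
Substituting gives
\begin{align*}
\mathbf{D}_{n,1}(t)=\bigl(tB_{n-1}^+(t)+B_{n-1}^-(t)\bigr)-(n-1)t\bigl(B_{n-2}^-(t)+B_{n-2}^+(t)\bigr).
\end{align*}

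Next, we use Lemma~\ref{lem-ABhalf} at index $n-1$, which states $B_{n-1}^-(t)+tB_{n-1}^+(t)=2^{n-1}tA_{n-1}(t)$. This turns the first bracket into $2^{n-1}tA_{n-1}(t)$. The second bracket is $B_{n-2}(t)$, since $\mathfrak{B}_{n-2}=\mathfrak{B}_{n-2}^+\sqcup\mathfrak{B}_{n-2}^-$. Factoring out $t$ then yields \eqref{eq-DAB}.

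The only delicate point is the range of indices at small $n$. For $n\ge 3$, all the results invoked apply with positive indices. For $n=1$ and $n=2$, the terms $B_{n-2}^{\pm}$ and the application of Theorem~\ref{thm-BDhalf} at index $n-1=1$ involve $B_0$ or $B_{-1}$, which need conventions: $B_0^+=1$, $B_0^-=0$, and the factor $(n-1)=0$ kills the term when $n=1$. I would therefore check $n=1,2$ directly.
\begin{itemize}
\item For $n=1$: $\mathfrak{D}_{1,1}=\{1\}$ has no descents, so $\mathbf{D}_{1,1}(t)=1$. Here the formula should be read with the convention $A_0(t)=t^{-1}$ coming from $\mathbf{A}_{n,1}=tA_{n-1}$; otherwise one simply takes $n\ge2$.
\item For $n=2$: $\mathfrak{D}_{2,1}=\{21,\bar2\bar1\}$. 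For $21$, the sum $\pi_1+\pi_2>0$ and there is a descent at position 1, giving $t$. For $\bar2\bar1$, the sum is negative, giving a descent at $0$, and there is no descent at 1, giving $t$. So $\mathbf{D}_{2,1}(t)=2t$. The right-hand side is $t(2\cdot1-1\cdot B_0)=t$ if $B_0=1$. This mismatch would be rechecked; the conventions for $B_0$ and the $n=1$ case of Theorem~\ref{thm-BDhalf} must be handled carefully.
\end{itemize}
I expect this boundary bookkeeping to be the only real obstacle. The main computation for general $n$ is a direct substitution.
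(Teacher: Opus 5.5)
Your main computation is the paper's proof: Lemma~\ref{lemma-D1}, then Theorem~\ref{thm-BDhalf} at index $n-1$, then Lemma~\ref{lem-ABhalf} at index $n-1$, together with $B_{n-2}=B_{n-2}^++B_{n-2}^-$.

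\textbf{The $n=2$ check contains an error.} The ``mismatch'' you report is spurious. The permutation $\bar{2}\bar{1}$ ends in $-1$, not $1$, so it does not lie in $\mathfrak{D}_{2,1}$. Also, $\bar{2}1$ has an odd number of negative entries, so it is not in $\mathfrak{D}_2$. Hence $\mathfrak{D}_{2,1}=\{21\}$ and $\mathbf{D}_{2,1}(t)=t$, which agrees with $t\bigl(2A_1(t)-B_0(t)\bigr)=t$.

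In fact your general substitution already covers $n=2$, once you take $B_0^+=1$, $B_0^-=0$ and note $D_1^+=1$, $D_1^-=0$, $B_1^+=1$, $B_1^-=t$:
\begin{itemize}
\item Theorem~\ref{thm-BDhalf} holds at index $1$, since $B_1^+=1=D_1^++B_0^-$ and $B_1^-=t=D_1^-+tB_0^+$.
\item Lemma~\ref{lemma-D1} holds at $n=2$, since $tD_1^++D_1^-=t$.
\end{itemize}
These direct verifications are worth writing down. The bijective proofs of Lemmas~\ref{lem-half-smooth} and~\ref{lem-nonsmooth} need an entry $\pi_2$, so they do not literally cover index $1$.

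\textbf{Your $n=1$ observation is correct.} It exposes a defect in the statement itself: $\mathbf{D}_{1,1}(t)=1$, whereas the right-hand side is $tA_0(t)=t$ under the standard convention $A_0(t)=1$. Correspondingly, Lemma~\ref{lemma-D1} fails at $n=1$, because there is no $(n-1)$-th entry. The paper's proof does not address this. The clean fix is to state the theorem for $n\ge 2$, rather than adopt the ad hoc convention $A_0(t)=t^{-1}$.
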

\begin{proof}
	By Lemma \ref{lemma-D1} and Theorem \ref{thm-BDhalf}, we obtain
	\begin{align*}
		\mathbf{D}_{n,1}(t) &=t D_{n-1}^+(t) + D_{n-1}^-(t)\\
		                &=t \left(B_{n-1}^+(t)-(n-1) B_{n-2}^-(t) \right) +B_{n-1}^-(t) -(n-1) t B_{n-2}^+(t)\\
		                &=B_{n-1}^-(t) + t B_{n-1}^+(t) -(n-1) t \left(B_{n-2}^-(t) + B_{n-2}^+(t)\right)\\
		                &=B_{n-1}^-(t)  + t B_{n-1}^+(t) -(n-1) t B_{n-2}(t).
	\end{align*}
    Then by Lemma \ref{lem-ABhalf}, we have
	\begin{align*}
		\mathbf{D}_{n,1}(t) = 2^{n-1} t A_{n-1}(t) -(n-1) t B_{n-2}(t),
	\end{align*}
	as desired.
\end{proof}

In order to express $\mathbf{D}_{n,i}(t)$ in terms of $A_{n-1}(t)$ and $B_{n-2}(t)$ for all $1\leq i\leq n$, we establish the following recurrence for $\mathbf{D}_{n,i}(t)$.
\begin{proposition}\label{thm-D}
	For any $1\leq i\leq n$, we have
	\begin{align}\label{eq-D}
		\mathbf{D}_{n,i}(t)=\mathbf{D}_{n,i+1}(t) + (t-1)\mathbf{D}_{n-1,i}(t).
	\end{align}
\end{proposition}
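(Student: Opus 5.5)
The plan is to prove \eqref{eq-D} with a sign-preserving transposition of the values $i$ and $i+1$. Here I take $1\le i\le n-1$ and read $\mathbf{D}_{n,i+1}(t)$, $\mathbf{D}_{n-1,i}(t)$ as $0$ when their index is out of range. Define $\tau:\mathfrak{D}_{n,i}\to\mathfrak{D}_{n,i+1}$ as follows. Replace the final letter $i$ by $i+1$, and replace the letter $\varepsilon(i+1)$ occurring somewhere in $\pi_1\cdots\pi_{n-1}$ by $\varepsilon i$, where $\varepsilon\in\{\pm1\}$. No sign changes, so the number of negative entries is unchanged and $\tau(\pi)\in\mathfrak{D}_{n,i+1}$. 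Clearly $\tau$ is a bijection, since the same rule read backwards is its inverse.

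Next I compare $\Des_D(\pi)$ and $\Des_D(\tau(\pi))$. For any letter $x$ with $|x|\notin\{i,i+1\}$, the comparisons $x$ versus $\pm i$ and $x$ versus $\pm(i+1)$ agree. Also $-i$ is never present in $\pi$ alongside $+i$, and the same holds for $\pm(i+1)$. Hence a descent at a position $r\ge1$ can change only when the two adjacent letters are exactly the two letters being swapped. Since $i$ sits at position $n$, this happens only when $\varepsilon=+$ and $\pi_{n-1}=i+1$.

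For position $0$, the condition $\pi_1+\pi_2<0$ depends only on the relative order of $|\pi_1|,|\pi_2|$ and their signs. This order is unchanged unless $\{|\pi_1|,|\pi_2|\}=\{i,i+1\}$. That can only occur when $n=2$, and that tiny case I will check directly. Therefore $\tau$ preserves $\des_D$ except on the set $E$ of $\pi\in\mathfrak{D}_{n,i}$ ending with $(i+1)\,i$. On $E$, $\tau$ removes exactly the descent at $n-1$, so $\des_D(\tau(\pi))=\des_D(\pi)-1$. This gives
\[
\mathbf{D}_{n,i}(t)-\mathbf{D}_{n,i+1}(t)=\sum_{\pi\in E}\bigl(t^{\des_D(\pi)}-t^{\des_D(\pi)-1}\bigr).
\]

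Finally I identify $E$ with $\mathfrak{D}_{n-1,i}$. Delete the last letter $i$ and take the reduction in the sense preceding \eqref{eq-ab-for-red}, which lowers every absolute value larger than $i$ by one. The result ends with $i$, and since a positive letter was removed it still has an even number of negatives. Reduction preserves relative order and signs, so it preserves all descents at positions $1,\dots,n-2$. It also preserves whether $\pi_1+\pi_2<0$, because that condition is determined by signs and the order of absolute values. The one lost descent is the one at $n-1$. Hence this map is a bijection $E\to\mathfrak{D}_{n-1,i}$ with $\des_D(\pi)=\des_D(\sigma)+1$, and each term above equals $(t-1)t^{\des_D(\sigma)}$. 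Summing gives \eqref{eq-D}.

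I expect the main obstacle to be the careful bookkeeping at position $0$ of the type $D$ descent set, together with the small cases $n\le 2$ and the boundary convention at $i=n$. These are where the "only adjacent swapped letters matter" argument needs the separate sign-sum check above.
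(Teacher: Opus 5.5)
Your proof is correct and is essentially the paper's argument. Your value swap $\tau$ is the bijection the paper only gestures at between $\mathfrak{D}_{n,i}\setminus\widetilde{X}$ and $\mathfrak{D}_{n,i+1}\setminus X$, your exceptional set $E$ is the paper's $\widetilde{X}$, and your position-$0$ check and your deletion-plus-reduction identification of $E$ with $\mathfrak{D}_{n-1,i}$ supply details the paper leaves implicit.

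Your restriction to $1\le i\le n-1$ is genuinely needed, not just a convenience. At $i=n$, reading the out-of-range terms as $0$ would force $\mathbf{D}_{n,n}(t)=0$, which is false (already $\mathbf{D}_{1,1}(t)=1$). So the stated range $1\le i\le n$ should be read as $1\le i\le n-1$.
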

\begin{proof}
	Define two subsets of $\mathfrak{D}_n$ as follows:
	$$X=\{w\in\mathfrak{D}_n\mid w_{n-1}=i, w_n=i+1\} \mbox{ and } \widetilde{X}=\{w\in\mathfrak{D}_n\mid w_{n-1}=i+1, w_n=i\}.$$
	We decompose the restricted Eulerian polynomials $\mathbf{D}_{n,i+1}(t)$ and $\mathbf{D}_{n,i}(t)$ by excluding or including these subsets, leading to:
	\begin{align}
		\mathbf{D}_{n,i+1}(t)&=\sum_{w\in D_{n,i+1}\backslash X}t^{\des_D(w)}+\sum_{w\in X}t^{\des_D(w)}.\label{eq-DD} \\
		\mathbf{D}_{n,i}(t)&=\sum_{w\in D_{n,i}\backslash \widetilde{X}}t^{\des_D(w)}+\sum_{w\in \widetilde{X}}t^{\des_D(w)}\label{eq-DD2}.
	\end{align}
	Notice that
	\begin{align*}
		\sum_{w\in D_{n,i+1}\backslash X}t^{\des_D(w)} = \sum_{w\in D_{n,i}\backslash \widetilde{X}}t^{\des_D(w)}.
	\end{align*}
	This holds because the only difference between $D_{n,i+1}$ and $D_{n,i}$ lies in the last two elements.
	Moreover,
	\begin{align*}
		t\cdot\sum_{w\in X}t^{\des_D(w)}= \sum_{w\in \widetilde{X}}t^{\des_D(w)}.
	\end{align*}
	This follows because swapping $w_{n-1} = i$ and $w_n = i+1$ creates an additional descent at position $n-1$.
    Subtracting equation \eqref{eq-DD} from equation \eqref{eq-DD2}, we obtain
	\begin{align*}
		\mathbf{D}_{n,i}(t)-\mathbf{D}_{n,i+1}(t)= (t-1)\cdot \sum_{w\in X}t^{\des_D(w)}=(t-1) \mathbf{D}_{n-1,i}(t).
	\end{align*}
	This completes the proof.
\end{proof}

Combining Theorem \ref{thm-DAB1} and Proposition \ref{thm-D}, we can express $\mathbf{D}_{n,i}(t)$ in terms of $A_{n-1}(t)$ and $B_{n-2}(t)$ for all $1\leq i\leq n$. By iterating equation \eqref{eq-D}, for $i\geq 2$, we have
\begin{align*}
	\mathbf{D}_{n,i}(t)=\mathbf{D}_{n,1}(t)+(i-1)\sum_{r=1}^{i-2}(1-t)^r\mathbf{D}_{n-r,1}(t)+(1-t)^{i-1}\mathbf{D}_{n-i+1,1}(t).
\end{align*}
Then substituting equation \eqref{eq-DAB} into the above equation, for $i\geq 2$, we obtain
{\small\begin{align*}
	\mathbf{D}_{n,i}(t)&=2^{n-1}tA_{n-1}(t)+(i-1)\sum_{r=1}^{i-2}2^{n-r-1}t(1-t)^rA_{n-r-1}(t)+2^{n-i}t(1-t)^{i-1}A_{n-i}(t)-\\
     &(n-1)tB_{n-2}(t) - (i-1)\sum_{r=1}^{i-2}(n-r-1)t(1-t)^rB_{n-r-2}(t) - (n-i)t(1-t)^{i-1}B_{n-i-1}(t).
\end{align*}}
When $i$ is a negative integer,  following an argument analogous to that of Lemma \ref{lem-B}, we can prove $\mathbf{D}_{n,i}(t)=t^n\mathbf{D}_{n,-i}(t^{-1})$. This allows us to express $\mathbf{D}_{n,i}(t)$ in terms of $A_{n-1}(t)$ and $B_{n-2}(t)$ for any $-n\leq i\leq -1$.
However, this formula is not concise as Theorem \ref{thm-DAB1}. We guess that a simpler relation exists between $\mathbf{D}_{n,i}(t)$ and  $\mathbf{A}_{n,i}(t)$ and  $\mathbf{B}_{n,i}(t)$ for any $i$. It is natural to ask the following problem.

\begin{problem}
	Find a direct relation between $\mathbf{D}_{n,i}(t)$, $\mathbf{A}_{n,i}(t)$, and $\mathbf{B}_{n,i}(t)$ for any $-n\leq i\leq n$.
\end{problem}

\noindent \textbf{Acknowledgments.}


We wish to thank Arthur L.B. Yang for  for the fruitful discussions and and Ethan Y.H. Li for his valuable suggestions on the revision of this paper. We would also like to thank the anonymous referees for their helpful comments.


\begin{thebibliography}{99}

\bibitem{AB25}  A. Abram and J. Bastidas, The $h^*$-polynomials of type $C$ hypersimplices, \emph{arXiv: 2504.038898}, 2025.

\bibitem{Brenti94} F. Brenti, $q$-Eulerian polynomials arising from Coxeter groups, \emph{European J. Combin.} 15 (1994), 417--441. 

\bibitem{BW08} F. Brenti and V. Welker, $f$-Vectors of barycentric subdivisions, \emph{Math. Z.} 259 (2008), 849--865. 

\bibitem{Chow22} C.-O. Chow, New proofs of interlacing of zeros of Eulerian polynomials, \emph{J. Math. Anal. Appl.} 510 (2022), 126019.

\bibitem{C2010} M.A. Conger, A refinement of the Eulerian numbers, and the joint distribution of $\pi(1)$ and $\Des(\pi)$ in $S_n$, \emph{Ars Combin.} 95 (2010), 445--472.

\bibitem{D23} H.K. Dey, Interlacing of zeroes of certain real-rooted polynomials, \emph{Arch. Math.} (Basel) 120 (2023), 457--466. 

\bibitem{DSS24} H.K. Dey, U. Shankar and S. Sivasubramanian, A descent-excedance correspondence in colored permutation groups, \emph{Enumer. Comb. Appl.} 5 (2025),  Article \#S2R24.

\bibitem{DPS09} K. Dilks, T.K. Petersen, and J.R. Stembridge, Affine descents and the Steinberg torus, \emph{Adv. Appl. Math.} 42 (2009), 423--444.

\bibitem{ERS98} R. Ehrenborg, M. Readdy and E. Steingr\'imsson, Mixed volumes and slices of the cube, \emph{J. Comb. Theory, Ser. A} 81 (1998), 121--126. 

\bibitem{F1910} G. Frobenius, \"Uber die Bernoullischen Zahlen und die Eulerschen polynome, Sitzungsberichte der \"oniglich Preußischen Akademie der Wissenschaften (1910), \emph{Zweiter Halbband}, pp. 809--847.

\bibitem{Guo24} W.M. Guo, Zeros distribution and interlacing property for certain polynomial sequences, \emph{Open Math.} 22 (2024), 20240085.

\bibitem{H16}  M. Hyatt, Recurrences for Eulerian Polynomials of Type $B$ and Type $D$, \emph{Ann. Comb.} 20 (2016), 869--881.

\bibitem{KN09} M. Kubitzke and E. Nevo, The Lefschetz property for barycentric subdivisions of shellable complexes, \emph{Trans, Amer. Math. Soc.} 361 (2009), 6151--6163.

\bibitem{LW07} L.L. Liu and Y. Wang, A unified approach to polynomial sequences with only real zeros, \emph{Adv. Appl. Math.} 38 (2007), 542--560. 

\bibitem{Petersen-book} T.K. Petersen, Eulerian Numbers. Springer, New York (2015).

\bibitem{MMYY22} S.-M. Ma, J. Ma, J. Yeh and Y.-N. Yeh, Eulerian pairs and Eulerian recurrence systems, \emph{Discrete Math.} 345 (2022), 112716.

\bibitem{NPT11} E. Nevoa, T.K. Petersenb and B.E. Tenner, The $\gamma$-vector of a barycentric subdivision, \emph{J. Comb. Theory, Ser. A} 118 (2011), 1364--1380.

\bibitem{P19} A. Postnikov, Permutohedra, associahedra, and beyond, \emph{Int. Math. Res. Not.} 6 (2019), 1026--1106.

\bibitem{S23} L. Santocanale, Bijective proofs for Eulerian numbers of types $B$ and $D$, \emph{Discrete Math. Theor. Comput. Sci.} 23 (2023), no. 2.  

\bibitem{SV15} C.D. Savage and M. Visontai, The $s$-Eulerian polynomials have only real roots, \emph{Trans. Amer. Math. Soc.}  367 (2015), 1441--1466.

\bibitem{Stembridge94} J.R. Stembridge. Some permutation representations of Weyl groups associated with the cohomology of toric
 varieties. \emph{Adv. Math.} 106 (1994), 244--301. 

\bibitem{VW13} M. Visontai and N. Williams, Stable multivariate W-Eulerian polynomials. \emph{J. Combin. Theory Ser. A} 120 (2013), 1929--1945.


\bibitem{WXX10} R.-H. Wang, Y. Xu and Z-Q. Xu, Eulerian numbers: A spline perspective, \emph{J. Math. Anal. Appl.} 370 (2010), 486--490.

\bibitem{YZ} A.L.B Yang and P.B Zhang,  Descent generating polynomials and the Hermite-Biehler theorem, \emph{J. Algebr. Comb.} 56 (2022), 117--152.

\bibitem{Zhang25} R. Zhang, The logarithm of the exponential generating function of Eulerian polynomials, \emph{Contrib. Discrete Math.}, to appear.

\bibitem{Zhu20} B.-X. Zhu, A generalized Eulerian triangle from staircase tableaux and tree-like tableaux, \emph{J. Comb. Theory, Ser. A}, 172 (2020), 105206.
\end{thebibliography}
\end{document}